\newtheorem{thm}{Theorem}[section]
\newtheorem{lem}[thm]{Lemma}
\newtheorem{obs}[thm]{Observation}
\newtheorem*{fact}{Fact}
\theoremstyle{definition}
\newtheorem{exa}{Example}[section]
\numberwithin{equation}{section}
\newcommand{\R}{\mathbb{R}}
\title{On fractional realizations of graph degree sequences}
\author{Michael D. Barrus\\Department of Mathematics, Brigham Young University\\Provo, UT 84602}
\begin{document}
\maketitle
\begin{abstract}
We introduce fractional realizations of a graph degree sequence and a closely associated convex polytope. Simple graph realizations correspond to a subset of the vertices of this polytope. We describe properties of the polytope vertices and characterize degree sequences for which each polytope vertex corresponds to a simple graph realization. These include the degree sequences of pseudo-split graphs, and we characterize their realizations both in terms of forbidden subgraphs and graph structure.
\end{abstract}

\section{Introduction}
A list of nonnegative integers is called graphic if it is the degree sequence of a simple graph. In the following, let $d=(d_1,\dots,d_n)$ be a graphic list, and consider a vertex set $\{1,\dots,n\}$, which we denote by $[n]$. A \emph{realization of $d$} is a simple graph with vertex set $[n]$ where each vertex $i$ has degree $d_i$. A given degree sequence may have several realizations. Many interesting questions concern these realizations, such as determining properties that these graphs may singly hold or must all hold, and finding techniques for generating all realizations or randomly selecting one.
papers on generating realizations.

Many algorithms for generating realizations first find one by using an algorithm of Havel~\cite{Havel55} and Hakimi~\cite{Hakimi62} and then use 2-switches (described later herein) or similar graph operations to obtain all other realizations. Other approaches may avoid edge-switching; see the paper by Kim et al.~\cite{KimEtAl09} for references to many algorithms and for an example of a ``degree-based'' procedure that generates realizations by systematically searching through adjacency scenarios.

This paper will approach realizations of a degree sequence from a degree-based perspective, albeit with a somewhat relaxed notion of a realization. Given $d$ and the vertex set $[n]$, we associate a variable $x_{ij}$ with each unordered pair $i,j$ of distinct vertices. Interpreting $x_{ij}=1$ to mean that vertices $i$ and $j$ are adjacent and $x_{ij} = 0$ to mean that they are not, each realization of $d$ naturally corresponds to a solution to %
\begin{align*}
\sum_{i} x_{ij} = d_j, \qquad & 1 \leq j \leq n;\\
x_{ij} \in \{0,1\}, \qquad & 1 \leq i < j \leq n,
\end{align*} %
where the sum is over all $i$ in $[n]$ other than $j$. We thus model degree sequence realizations as solutions to an integer problem.

The conditions above are typical of those found in integer programming problems. One common technique in optimization is to relax the requirement that the variables be integers; instead, we allow the variables to take on values in prescribed intervals and solve a ``fractional'' version of the problem. Fractional graph theory often models combinatorial parameters as integer problems and relaxes them in this way. Fractional analogues of these combinatorial notions have opened up a rich landscape in which classical results may be placed in broader context or given simpler proofs. A good introduction to fractional graph theory may be found in~\cite{ScheinermanUllman97}.

We now relax the integer conditions on the variables $x_{ij}$ above. Consider the set $P(d)$ of all points $x=(x_{ij})$ in $\R^{\binom{n}{2}}$ whose coordinates are lexicographically indexed by pairs $i,j$ (with $i<j$) of vertices in $[n]$ and that satisfy the conditions
\begin{align}
\label{eq: sum=deg} \sum_{i} x_{ij} = d_j, \qquad & 1 \leq j \leq n;\\
\label{eq: in [0,1]} 0 \leq x_{ij} \leq 1, \qquad & 1 \leq i < j \leq n.
\end{align}

Given a point $x$ in $P(d)$, we define the \emph{fractional realization of $d$ corresponding to $x$} to be the labeling of the edges of the complete graph on $[n]$ such that the edge $ij$ receives the label $x_{ij}$ for all pairs $i,j$ of distinct elements in $[n]$. Figure~\ref{fig: frac realizations} illustrates three fractional realizations of $(1,1,1,1,1,1)$ (for clarity, edges labeled with 0 are not shown.) As in (a), simple graph realizations of $d$ correspond naturally to fractional realizations in which each edge of the complete graph is labeled with $0$ or $1$. We refer at times to the point $x$ as the \emph{characteristic vector} of the fractional realization. We call the conditions in~\eqref{eq: sum=deg} and~\eqref{eq: in [0,1]} the \emph{degree conditions} and \emph{hypercube conditions}, respectively.

\begin{figure}
\centering
\includegraphics{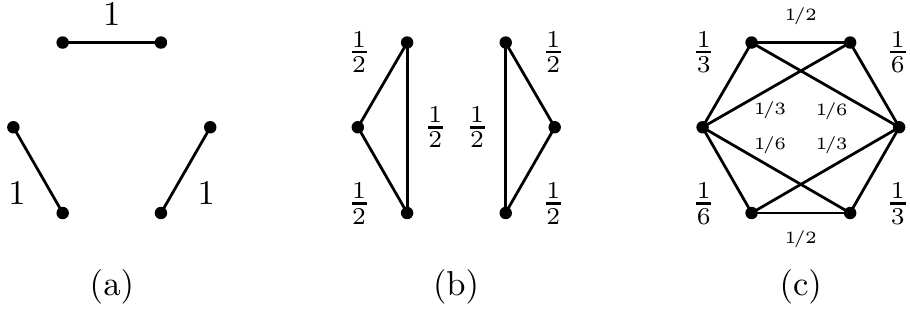}
%
\caption{Fractional realizations of $(1,1,1,1,1,1)$.}
\label{fig: frac realizations}
\end{figure}

The set $P(d)$ is a convex polytope, the convex hull of a set of points in $\R^{\binom{n}{2}}$. As such, perhaps the first question that arises about $P(d)$ is what its extreme points are. Given the origin of our formulation of $P(d)$, we also wish to see if points corresponding to simple graph realizations have any special role. We can easily answer the latter question; each $(0,1)$-vector in $P(d)$ is a vertex of the polytope, since it satisfies $\binom{n}{2}$ of the hypercube conditions with equality. In fact, for all graphic $d$ with five or fewer terms, we can easily verify with a computer algebra system that the vertices of $P(d)$ correspond precisely to simple graph realizations of $d$.

For more general $d$, however, $P(d)$ may have non-integral vertices. For example, if $d=(1,1,1,1,1,1)$, then the characteristic vector of the fractional realization in Figure~\ref{fig: frac realizations}(b) is also a vertex of $P(d)$; in fact, of the 25 vertices of $P(d)$, ten have non-integral coordinates.

Thus the vertices of $P(d)$ may or may not correspond to simple graph realizations of $d$. In this paper we study the vertices of $P(d)$ and conditions under which they have only integer coordinates.  We first characterize the extreme points of $P(d)$ in Section 2; we show that, as illustrated in Figure~\ref{fig: frac realizations}(b), these vertices are precisely those whose coordinates come from $\{0, 1/2, 1\}$, with the $1/2$-values assigned to edges that form vertex disjoint odd cycles.

We then study sequences $d$ for which $P(d)$ has no non-integral vertices. We call these sequences \emph{decisive} (since they force each $x_{ij}$ to take either 0 or 1 as its value), and we call their realizations \emph{decisive graphs}. We characterize the decisive sequences and decisive graphs in Sections 3 through 5. In Section 3, we identify decisive sequences via a forbidden configuration condition. As consequences we find that the decisive graphs form a hereditary class containing the pseudo-split graphs, and we determine a complete list of their minimal forbidden induced subgraphs. In Section 4 we obtain a structural characterization of decisive graphs that generalizes the vertex partition properties of split and pseudo-split graphs. In Section 5 this structural characterization yields another characterization of decisive sequences. We conclude with some remarks on $P(d)$ and our characterizations of decisive sequences and graphs in Section 6.

Before proceeding, we define some terms and notation. The vertex set of a graph $G$ will be denoted by $V(G)$. Given vertices $u, v \in V(G)$, we say that $u$ is a neighbor of $v$ if $u$ is adjacent to $v$. Otherwise, we may refer to $u$ as a non-neighbor of $v$ or say that $uv$ is a non-edge of $G$. Given $W \subseteq V(G)$, we use $G[W]$ to denote the induced subgraph of $G$ with vertex set $W$. The complement of a graph $G$ will be denoted by $\overline{G}$. Complete graphs, cycles, and paths with $n$ vertices will be denoted by $K_n$, $C_n$, and $P_n$, respectively. The complete bipartite graph with partite sets of sizes $a$ and $b$ is denoted by $K_{a,b}$. The house graph is defined as the complement of $P_5$.

\section{Vertices of $P(d)$}
In this section we characterize the extreme points of $P(d)$ in terms of their coordinates. As the following theorem shows, the structure exhibited by the fractional realization of $(1,1,1,1,1,1)$ in Figure~\ref{fig: frac realizations}(b) is typical of those corresponding to nonintegral vertices of $P(d)$.

\begin{thm}\label{thm: vtcs of S}
Given a graphic list $d$, let $h$ be a point of $P(d)$, and let $H$ be the fractional realization of $d$ corresponding to $h$. The point $h$ is a vertex of $P(d)$ if and only if the edges of $H$ labeled with nonintegral coordinates of $h$ form vertex disjoint odd cycles. Furthermore, there are an even number of these cycles, and the nonintegral coordinates of $h$ all equal $1/2$.
\end{thm}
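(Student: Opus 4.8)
The plan is to combine the standard extreme-point criterion for polytopes with a rank computation for the vertex--edge incidence matrix of the subgraph formed by the fractionally labeled edges. Recall that $h$ is a vertex of $P(d)$ precisely when $h$ is not the midpoint of a nondegenerate segment contained in $P(d)$; equivalently, there is no nonzero $y \in \R^{\binom{n}{2}}$ with $h \pm \epsilon y \in P(d)$ for all sufficiently small $\epsilon > 0$. I would first determine which directions $y$ are admissible. The degree conditions~\eqref{eq: sum=deg} are preserved under $h \mapsto h \pm \epsilon y$ if and only if $\sum_{e \ni j} y_e = 0$ for every vertex $j$, where the sum runs over edges $e$ incident to $j$. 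The hypercube conditions~\eqref{eq: in [0,1]} force $y_e = 0$ on every edge $e$ with $h_e \in \{0,1\}$, since such coordinates sit on the boundary of $[0,1]$ and cannot be perturbed in both directions, while any edge with $0 < h_e < 1$ may be perturbed freely for small $\epsilon$. Thus the admissible directions are exactly the vectors supported on the set $S$ of fractional edges that lie in the kernel of the vertex--edge incidence matrix $M$ of the subgraph $F = ([n], S)$.

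Next I would record a structural constraint that holds for every point of $P(d)$: each vertex of $F$ is incident to at least two fractional edges. Indeed, fixing a vertex $j$, the contribution to $d_j$ from the edges with integer labels is an integer, so the sum of the labels on the fractional edges at $j$ must also be an integer; a single label in the open interval $(0,1)$ is not an integer, so $j$ meets either zero or at least two fractional edges. Hence $F$ has minimum degree at least $2$ on its non-isolated vertices. Now $h$ is a vertex of $P(d)$ exactly when $M$ has trivial kernel. Using the standard fact that the rank of the (unsigned) incidence matrix of a graph equals the number of vertices minus the number of bipartite components, the kernel of $M$ is trivial if and only if every component of $F$ is either a tree or is unicyclic with its unique cycle odd. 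The minimum-degree-$2$ condition eliminates all leaves, ruling out trees entirely and forcing each unicyclic component to coincide with its odd cycle. Therefore $h$ is a vertex if and only if $F$ is a disjoint union of odd cycles; the converse direction also follows, since a disjoint union of odd cycles has incidence matrix of full column rank, so no admissible direction exists.

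For the remaining assertions I would argue locally on each odd cycle. Fix a cycle $v_1 v_2 \cdots v_{2k+1} v_1$ of $F$. By the integrality argument above, the two fractional labels at each $v_i$ sum to an integer, and since each lies in $(0,1)$ this sum equals $1$; writing $h_i$ for the label on $v_i v_{i+1}$ gives $h_i + h_{i+1} = 1$ for all $i$ modulo $2k+1$. Reading these relations around the cycle yields $h_1 = h_3 = \cdots$ and $h_2 = h_4 = \cdots$, and the odd length closes the cycle with $2h_1 = 1$, so every label equals $1/2$. Finally, to count the cycles I would use that $\sum_e h_e = \tfrac{1}{2} \sum_j d_j$ is an integer because $d$ is graphic. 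The integer-labeled edges contribute an integer to this sum, while a single odd cycle of length $2k+1$ contributes $(2k+1)/2$, a half-integer; thus the number of odd cycles must be even for the total to be an integer.

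I expect the main obstacle to be packaging the rank computation cleanly: the crux is recognizing that the admissible perturbations are exactly the weightings supported on the fractional edges that preserve every vertex sum, and that the minimum-degree-$2$ observation is precisely what upgrades the generic conclusion (each component a tree or an odd-unicyclic graph) to the desired one (a disjoint union of odd cycles). If one prefers to avoid invoking the incidence-rank formula, the same conclusion can be reached constructively by exhibiting an explicit alternating $\pm t$ perturbation along any even cycle, which must exist in a bipartite component of minimum degree $2$, and a suitable weighting through two odd cycles joined by a path whenever a non-bipartite component is not a single odd cycle; handling that second case carefully is the fiddly part, which is why the rank formula is the more economical route.
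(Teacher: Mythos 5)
Your proposal is correct, and it reaches the theorem by a genuinely different route from the paper. The paper characterizes a vertex of $P(d)$ as the unique solution of a full-rank system $Ax=b$ built from $\binom{n}{2}$ tight constraints; invertibility of $A$ yields a transversal of nonzero entries, which is used to injectively assign to each fractional edge a vertex whose degree constraint ``covers'' it, giving $p\geq q$; the reverse inequality $p\leq q$ comes from the same degree-at-least-two observation you make, forcing $2$-regularity; even cycles are then excluded by an explicit alternating $\pm\alpha$ perturbation, and the converse is a separate block-determinant computation with odd-cycle incidence matrices. You instead apply the extreme-point perturbation criterion directly, identify the admissible directions as the kernel of the real vertex--edge incidence matrix of the fractional subgraph $F$, and invoke the standard rank formula (rank $=$ number of non-isolated vertices minus number of bipartite components) so that triviality of the kernel is equivalent to every component being a tree or odd-unicyclic; the minimum-degree-two observation then collapses this to ``disjoint union of odd cycles.'' This handles both directions of the equivalence in one stroke and replaces the paper's transversal argument and its explicit even-cycle perturbation with a single citation of the rank formula (the paper's determinant computation for the converse is really the odd-cycle special case of that formula). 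The cost is reliance on that external fact, and you should be explicit that you mean the rank over $\R$, not over $GF(2)$, since the bipartite-component count is what makes odd cycles special. Your treatment of the two final assertions --- the forced value $1/2$ from the alternating relations $h_i+h_{i+1}=1$ around an odd cycle, and the parity count of cycles via $\sum_e h_e=\tfrac12\sum_j d_j\in\mathbb{Z}$ --- coincides with the paper's.
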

\begin{proof}
Suppose that $h$ is a vertex of $P(d)$. Then $h$ is the unique point in the intersection of $\binom{n}{2}$ of the bounding hyperplanes. We may express the equations of these hyperplanes as a matrix-vector equation $Ah = b$, where $A$ is an $\binom{n}{2}$-by-$\binom{n}{2}$ matrix and $b$ is a vector in $\R^{\binom{n}{2}}$. Let $Q$ be the set of all edges of $H$ labeled with nonintegral values, and let $P$ be the set of vertices of $H$ belonging to an edge in $Q$; further let $p=|P|$ and $q=|Q|$. We show that $p=q$.

Since $h$ is the unique solution of $Ax = b$, we see that $A$ is invertible and hence has nonzero determinant. This implies that there exists a collection $T$ of $\binom{n}{2}$ nonzero entries in $A$ with no two in the same row or column. Consider an arbitrary pair $(i,j)$ such that the entry $h_{ij}$ is nonintegral. Consider the element of $T$ in the column of $A$ corresponding to the edge $ij$. The row containing this element clearly does not come from a hyperplane of the form $x_{ij}=\alpha$, where $\alpha \in \{0,1\}$; it must instead belong to a row arising from the degree condition at a vertex incident with edge $ij$. We associate this vertex (which belongs to $P$) with the edge $ij$ (which belongs to $Q$), and we similarly associate a vertex of $P$ with every other edge in $Q$. Since $T$ contains exactly one entry in each row of $A$, and $A$ has full rank, distinct edges in $Q$ must be associated with distinct vertices in $P$. Hence $p \geq q$.

Since $h$ is a vertex of $P(d)$, it satisfies all the degree conditions imposed by $d$. Thus all edges meeting at a vertex $v$ must have values that sum to an integer, and hence if some edge incident with $v$ is nonintegral, there must be another edge incident with $v$ that is also nonintegral. Hence every vertex in $P$ is incident with at least two edges in $Q$. An elementary counting argument shows that $p \leq q$, with equality if and only if each vertex in $P$ is incident with exactly two edges of $Q$. We have seen that $p \geq q$, so in fact $p=q$, and the edges of $H$ labeled with nonintegral entries of $h$ comprise a 2-regular graph $(P,Q)$. This graph, which we call $R$, is a vertex disjoint union of cycles, as claimed.

We now claim that all cycles of $R$ are odd. If $R$ contains an even cycle with edges $e_1,\dots,e_m$ in order, then let $\alpha$ denote the smaller of $\min\{h_{e_i}: 1 \leq i \leq m, \text{ }i\text{ odd}\}$ and $\min\{1-h_{e_i}: 1 \leq i \leq m, \text{ }i\text{ even}\}$. Define $h'$ to be the vector agreeing with $h$ on all coordinates except for those corresponding to $e_1,\dots,e_m$, where instead we define $h'_{e_i} = h_{e_i}-\alpha$ for odd $i$ and $h'_{e_i}=h_{e_i}+\alpha$ for even $i$. Note that $h'$ satisfies all degree conditions for $d$ and also maintains all integral entries of $h$. Thus we have $Ah' = b$, which contradicts the claim that $h$ was the unique solution to $Ax=b$. Thus $R$ contains no even cycle.

Conversely, let $g$ be a point of $P(d)$ with corresponding fractional realization $G$ such that the edges of $G$ labeled with nonintegral coordinates of $g$ form pairwise disjoint odd cycles. Suppose that there are $k$ such cycles, and that altogether they contain $q$ vertices and $q$ edges. Consider the system $Y$ of equations consisting of the equation $x_{ij}=g_{ij}$ for every edge $ij$ of $G$ labeled with an integral coordinate of $g$. Take all degree equations corresponding to vertices of $G$ incident with at least one of the edges $ij$ of $G$ for which $g_{ij}$ is nonintegral; reduce these by substituting in the values of $x_{ij}$ explicitly specified by $Y$. The resulting equations each contain two variables; augment $Y$ to include these equations. Modeling the equations of $Y$ by the matrix-vector equation $Ax=b$, we may permute the columns of $A$ to create a block diagonal matrix with the form \[\begin{bmatrix}
A_{11} & 0 & \cdots & 0\\
0 & \ddots & \ddots & \vdots\\
\vdots & \ddots & A_{kk} & 0\\
0 & \cdots & 0 & I
\end{bmatrix},\]
where $I$ indicates the identity matrix of order $\binom{n}{2}-p$, and each $A_{ii}$ has the form \[
\begin{bmatrix}
1 & 1 & 0 & \cdots & 0\\
0 & 1 & 1 & \ddots & \vdots\\
\vdots & \ddots & \ddots & \ddots & 0\\
0 & \cdots & 0 & 1 & 1\\
1 & 0 & \cdots & 0 & 1
\end{bmatrix}
\]
(i.e., the vertex-edge incidence matrix of a cycle) with order equal to the number of vertices in the $i$th cycle of nonintegrally labeled edges in $G$. It is straightforward to verify that since the order of each $A_{ii}$ is odd, $\det A_{ii} \neq 0$ for each $i$, and thus $Ax = b$ has the unique solution $x=g$. Hence $g$ is a vertex of $P(d)$.

Now with $h$ a vertex of $P(d)$ and $H$ its corresponding fractional realization, and with the graph $R=(P,Q)$ as described above, consider two consecutive edges on one of the cycles in $R$. Each has a value strictly between 0 and 1, and the two values must sum to an integer. If the first has value $\alpha$, then the second must have value $1-\alpha$. Continuing around the cycle, the edges alternately have values $\alpha$ and $1-\alpha$. However, since the cycle has odd length, we eventually see that the each edge has a value simultaneously equal to $\alpha$ and $1-\alpha$. This forces $\alpha=1/2$.

Finally, note that when the values of all edge labels in $H$ are added together, each cycle in $R$ contributes an odd multiple of $1/2$. However, the sum should equal an integer (it equals half the sum of the vertex degrees; for a graphic list the degree sum is an even integer), so there must be an even number of cycles.
\end{proof}

In light of Theorem~\ref{thm: vtcs of S}, we refer henceforth to edges of a fractional realization as $0$-edges, $1/2$-edges, or $1$-edges according to the values of the coordinates they correspond to in the associated vector.

\section{Decisive sequences and blossoms}\label{sec: three}

One consequence of Theorem~\ref{thm: vtcs of S} is our assertion in Section 1 that any degree sequence $d$ with five or fewer terms has the property that all vertices of $P(d)$ are integral. For these $d$, the vertices of $d$ correspond exactly to the simple graph realizations of $d$. For which longer degree sequences is this also the case?

Because such degree sequences require the vertices of $P(d)$ to have coordinates each equal to $0$ or $1$---the extreme values of~\eqref{eq: in [0,1]}, and not anything in between---we call them \emph{decisive sequences}. As we will see, their realizations, the \emph{decisive graphs}, satisfy strict structural properties. In the remainder of the paper we characterize the decisive sequences and graphs. We handle these characterizations in three steps. In Section~\ref{sec: three} we show that a degree sequence is decisive if and only if none of its realizations contains a certain pattern of adjacencies and non-adjacencies known as a $(3,3)$-blossom. In Section~\ref{sec: four} we focus on decisive graphs and prove the equivalence of forbidding a $(3,3)$-blossom, forbidding each of a list of 70 potential induced subgraphs, and being able to partition the vertex set of a graph into three sets meeting certain adjacency properties. The strict structure these graphs possess allows us to return in Section~\ref{sec: five} to their degree sequences, this time characterizing the decisive sequences in terms of the numerical values of their terms.

We begin with some definitions. Given $d$, let $H$ be a fractional realization of $d$. Given odd integers $k,\ell \geq 3$, we define a \emph{fractional $(k,\ell)$-blossom} in $H$ to be a configuration on $k+\ell$ vertices $v_1,\dots,v_k,w_1,\dots,w_\ell$ in which the vertex pairs in \[\{v_1v_2, v_2v_3,\dots,v_{k-1}v_k, v_kv_1\} \text{ and } \{w_1w_2,w_2w_3,\dots,w_{\ell-1}w_\ell, w_\ell w_1\}\] are all $1/2$-edges, and the vertex pair $v_1w_1$ is either a $0$-edge or a $1$-edge. We denote this configuration by $(v_2,\dots,v_k,v_1; w_1,\dots,w_{\ell})$.

We further define an \emph{integral $(k,\ell)$-blossom} to be a configuration in $H$ on $\{v_1,\dots,v_k,w_1,\dots,w_\ell\}$ in which the vertex pairs \[v_1v_2, v_2v_3,\dots,v_{k-1}v_k, v_kv_1, v_1w_1, w_1w_2,w_2w_3,\dots,w_{\ell-1}w_\ell, w_\ell w_1\] are alternately $0$-edges and $1$-edges, with $v_1w_1$ either a $0$-edge or a $1$-edge. We denote this configuration by $[v_2,\dots,v_k,v_1; w_1,\dots,w_{\ell}]$. Possibilities for both fractional and integral $(3,3)$-blossoms are illustrated in Figures~\ref{fig: frac blossoms} and~\ref{fig: int blossoms}, where the $0$-edges, $1/2$-edges, and $1$-edges are represented by dotted, dashed, and solid lines, respectively.

\begin{figure}
\centering
\includegraphics{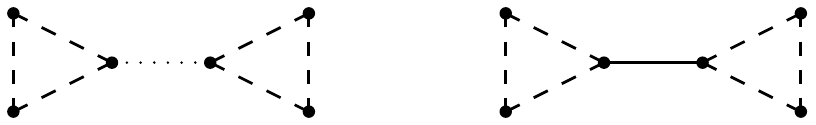}
\caption{Fractional $(3,3)$-blossoms}
\label{fig: frac blossoms}
\end{figure}
\begin{figure}
\centering
\includegraphics{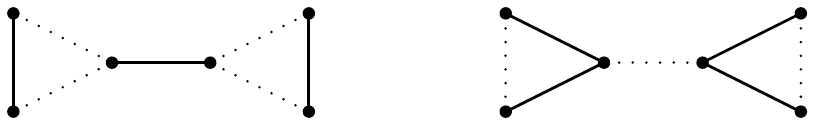}
\caption{Integral $(3,3)$-blossoms}
\label{fig: int blossoms}
\end{figure}

We now present our first characterization of decisive sequences. An \emph{integral realization} of $d$ is a fractional realization in which all edge labels are 0 or 1, as in Figure~\ref{fig: frac realizations}(a).

\begin{thm} \label{thm: decisive vs blossoms}
For a graphic sequence $d$, the following are equivalent:
\begin{enumerate}
\item[\textup{(1)}] $d$ is a decisive sequence;
\item[\textup{(2)}] No integral realization of $d$ contains an integral $(k,\ell)$-blossom for any odd $k,\ell \geq 3$;
\item[\textup{(3)}] No integral realization of $d$ contains an integral $(3,3)$-blossom.
\end{enumerate}
\end{thm}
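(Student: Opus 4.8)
The plan is to prove the cyclic chain of implications $(1)\Rightarrow(2)\Rightarrow(3)\Rightarrow(1)$, since $(2)\Rightarrow(3)$ is immediate (an integral $(3,3)$-blossom is a special case of an integral $(k,\ell)$-blossom). The conceptual heart of the argument is the correspondence, suggested by Theorem~\ref{thm: vtcs of S} and the naming conventions for blossoms, between \emph{fractional} $(k,\ell)$-blossoms (which certify a nonintegral vertex of $P(d)$) and \emph{integral} $(k,\ell)$-blossoms (which live inside an honest simple-graph realization). So before attacking the implications I would isolate the following key lemma: an integral realization $G$ of $d$ contains an integral $(k,\ell)$-blossom if and only if $d$ has a \emph{fractional} realization whose $1/2$-edges form exactly two vertex-disjoint odd cycles (of lengths $k$ and $\ell$), i.e.\ if and only if $P(d)$ has a nonintegral vertex of this specific shape.

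To see why such a lemma should hold, consider an integral $(k,\ell)$-blossom $[v_2,\dots,v_k,v_1;w_1,\dots,w_\ell]$ sitting in $G$. The two odd cycles $v_1\cdots v_k v_1$ and $w_1\cdots w_\ell w_1$ each have edges alternating between $0$-edges and $1$-edges of $G$, and the bridging pair $v_1w_1$ is a $0$- or $1$-edge. I would \emph{perturb} $G$ along these two cycles simultaneously: add $\tfrac12$ to each $0$-edge and subtract $\tfrac12$ from each $1$-edge around both cycles, turning every one of those $2$-colored edges into a $1/2$-edge, while leaving all other edges of $G$ untouched. The crucial point is that each vertex of the blossom is incident to exactly two altered edges whose values change by $+\tfrac12$ and $-\tfrac12$ respectively (because the edges alternate $0/1$ around each odd cycle, and the vertices of the two cycles are disjoint), so every degree condition~\eqref{eq: sum=deg} is preserved; the bridging edge $v_1w_1$ is not altered at all. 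The result is a point of $P(d)$ whose $1/2$-edges form precisely two vertex-disjoint odd cycles, which by Theorem~\ref{thm: vtcs of S} is a nonintegral vertex of $P(d)$. This gives $(3)\Rightarrow\neg(1)$, i.e.\ the contrapositive $(1)\Rightarrow(3)$; running the same perturbation in reverse (rounding the two odd cycles of a nonintegral vertex back to $0/1$ in the two possible alternating ways) recovers an integral realization containing an integral blossom, which yields $\neg(1)\Rightarrow(3)$ and hence $(3)\Rightarrow(1)$.

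For the implication $(1)\Rightarrow(2)$ I would argue the contrapositive: if some integral realization of $d$ contains an integral $(k,\ell)$-blossom, then applying the perturbation above to just those two odd cycles produces a nonintegral vertex of $P(d)$ by Theorem~\ref{thm: vtcs of S}, so $d$ is not decisive. The only subtlety worth checking carefully here is that the bridging condition on $v_1w_1$ (being a genuine $0$- or $1$-edge of $G$, rather than being absorbed into a cycle) guarantees the two perturbed cycles are genuinely \emph{vertex-disjoint} and that the blossom involves two separate cycles rather than one, so that Theorem~\ref{thm: vtcs of S}'s requirement of an even number of odd cycles is met with exactly two.

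The main obstacle I anticipate is the reverse direction embedded in $(3)\Rightarrow(1)$: starting from an arbitrary nonintegral vertex $h$ of $P(d)$ and producing an integral realization containing an integral $(3,3)$-blossom. By Theorem~\ref{thm: vtcs of S} the $1/2$-edges of $h$ form an even number $2m$ of vertex-disjoint odd cycles; if $m>1$ I must extract a $(3,3)$-blossom rather than a $(k,\ell)$-blossom with larger parameters. The delicate part is therefore a \emph{reduction} step: I expect to need to show that whenever $P(d)$ has any nonintegral vertex, one can always locate a $(3,3)$-blossom specifically, presumably by rounding all but two of the odd cycles to integer values and then arguing that two odd cycles of lengths $k,\ell\ge3$ together with an appropriate bridging edge always contain, or can be reduced to, a configuration on six vertices forming a $(3,3)$-blossom. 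Controlling the interaction between the chosen pair of cycles and the surrounding $0/1$ edges (to guarantee the bridging edge $v_1w_1$ exists with the right label and that no unwanted adjacencies spoil the induced configuration) is where the real work lies, and I would expect the authors to invoke an explicit small-cycle shortening or vertex-substitution argument to force the parameters down to $(3,3)$.
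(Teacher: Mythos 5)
There is a genuine error at the heart of your perturbation. You claim that turning every edge of the two odd cycles into a $1/2$-edge while leaving the bridge $v_1w_1$ untouched preserves the degree conditions, ``because the edges alternate $0/1$ around each odd cycle, so each vertex sees one $+\tfrac12$ and one $-\tfrac12$.'' But an odd cycle cannot carry a proper alternation of $0$s and $1$s: in the integral $(k,\ell)$-blossom the listed sequence $v_1v_2,\dots,v_kv_1,v_1w_1,w_1w_2,\dots,w_\ell w_1$ alternates, and since $k$ is odd the two cycle edges at $v_1$, namely $v_1v_2$ and $v_kv_1$, occupy positions of the same parity and hence carry the \emph{same} label $\beta\in\{0,1\}$ (similarly for $w_1w_2$ and $w_\ell w_1$ at $w_1$). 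Your perturbation therefore changes the degree at $v_1$ by $\pm 1$, not by $0$: the old contribution at $v_1$ is $2\beta+(1-\beta)=1+\beta$ and the new one is $\tfrac12+\tfrac12+(1-\beta)=2-\beta$, which agree only if $\beta=\tfrac12$. The fix --- and what the paper actually does --- is to simultaneously flip the bridge edge from its value $\alpha$ to $1-\alpha$; that $\pm1$ change at $v_1$ and $w_1$ exactly cancels the defect left by the odd cycles, and only then do you get a point of $P(d)$ whose nonintegral edges are two disjoint odd cycles, hence a nonintegral vertex by Theorem~\ref{thm: vtcs of S}. This is precisely why the blossom is defined with a bridge edge at all; as written, your construction for $(1)\Rightarrow(2)$ and for the forward half of your key lemma does not produce a fractional realization.

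Separately, for $(3)\Rightarrow(1)$ you correctly identify that one must get from an arbitrary nonintegral vertex (with possibly many odd cycles, possibly long) down to a $(3,3)$-blossom, but you leave this as an anticipated difficulty rather than an argument. The paper's route is: iteratively round pairs of fractional odd cycles into integral blossoms (again flipping the bridge each time) until an integral realization $G$ is reached, the last rounding yielding an integral $(k,\ell)$-blossom; then, to shrink $k$ to $3$, observe that either the chord $v_2v_k$ already has the needed label or one can switch the $0/1$ labels along the alternating cycle $v_2v_3\cdots v_kv_2$ to make it so, and likewise for $w_2w_\ell$, producing the $(3,3)$-blossom $[v_2,v_k,v_1;w_1,w_2,w_\ell]$ in some integral realization of $d$. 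Without both the corrected perturbation and this shortening step, the proposal does not close the cycle of implications.
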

\begin{proof}
\emph{(1) implies (2):} We prove the contrapositive. Suppose that $d$ has an integral realization $G$ containing an integral $(k,\ell)$-blossom \[[v_2,\dots,v_k,v_1;w_1,\dots,w_\ell]\] for some odd $k$ and $\ell$ such that $k,\ell \geq 3$. Let $H$ be a fractional realization of $d$ obtained by replacing the $k+\ell+1$ edges of this $(k,\ell)$-blossom with the edges of the fractional $(k,\ell)$-blossom $(v_2,\dots,v_k,v_1;w_1,\dots,w_\ell)$, with the edge $v_1w_1$ assigned the value $1-\alpha$, where $\alpha$ is the value of $v_1w_1$ in $G$. All other edges of $H$ receive the same value as in $G$. We claim that the characteristic vector $h$ of $H$ is a vertex of $d$, showing that $d$ is not decisive. By Theorem~\ref{thm: vtcs of S}, it suffices to show that $h$ is in $P(d)$. This is straightforward to verify, as all coordinates of $h$ satisfy the hypercube conditions, and the replacement of integral edges at each vertex by $1/2$-edges does not change the sum of edge values at that vertex, meaning that $H$ is a fractional realization of $d$, as claimed.

\emph{(2) implies (3):} Immediate.

\emph{(3) implies (1):} If $d$ is not a decisive sequence, then by Theorem~\ref{thm: vtcs of S} it contains a vertex $h$ corresponding to a fractional realization $H$ with at least two disjoint odd cycles of $1/2$-edges. Let $v_1,\dots,v_k$ and $w_1,\dots,w_\ell$ be the vertices of these respective cycles. Let $H'$ be a fractional realization of $d$ obtained by replacing the $k+\ell+1$ edges of the fractional $(k,\ell)$-blossom \[(v_2,\dots,v_k,v_1; w_1,\dots,w_\ell)\] with those of the integral $(k,\ell)$-blossom $[v_2,\dots,v_k,v_1; w_1,\dots,w_\ell]$, such that the edge $v_1w_1$ receives the value $1-\alpha$, where $\alpha$ is the value of $v_1w_1$ in $H$. All other edges of $H'$ receive the same value as in $H$. It is straightforward to verify that $H'$ is also a fractional realization of $d$, though it contains strictly fewer nonintegral edges. If we iteratively carry out switches similar to the one just described, we will eventually arrive at a realization $G$ of $d$ having no edges with nonintegral labels. The last switch performed creates an integral $(k,\ell)$-blossom in $G$; for convenience assume that it is $[v_2,\dots,v_k,v_1; w_1,\dots,w_\ell]$. Suppose that $v_1w_1$ is a 1-edge in $G$, so each of $v_1v_2$, $v_1v_k$, $w_1w_2$, and $w_1w_\ell$ is a non-edge. Either $v_2v_k$ is an edge or $v_2v_3,\dots,v_{k-1}v_k,v_kv_2$ is a sequence that alternates between 1-edges and 0-edges; switching the 0s and 1s assigned to these edges produces another integral realization of $d$ in which $v_2v_k$ is an edge. Similarly, either $w_2w_\ell$ is an edge or we may switch the 1s and 0s assigned to edges along the cycle $w_2,\dots,w_\ell,w_2$ to create a realization in which $w_2w_\ell$ is an edge. It follows that in some integral realization of $d$ there is an integral $(3,3)$-blossom $[v_2,v_k,v_1;w_1,w_2,w_k]$. A similar argument holds, with 1-edges and 0-edges exchanging roles, if $v_1w_1$ is a 0-edge of $G$.
\end{proof}

\section{Decisive graphs} \label{sec: four}

Theorem~\ref{thm: decisive vs blossoms} establishes the equivalence of a degree sequence being decisive and containing an integral $(3,3)$-blossom in none of its integral realizations. An immediate consequence of this is that we may change our framework of study in two ways. First, since integral realizations of $d$ correspond exactly with simple graph realizations of $d$, we may recognize whether the extreme points of $P(d)$ all have integral coordinates by examining simple graphs instead of fractional realizations. We use the term \emph{$(3,3)$-blossom} to mean a configuration analogous to an integral $(3,3)$-blossom, where edges and non-adjacencies replace 1-edges and 0-edges, respectively.

Second, instead of dealing with labeled graphs, where different realizations of a degree sequence were treated as distinct, we now may treat graphs in the same isomorphism class as the same. This is because whether a graph contains a $(3,3)$-blossom is determined by its isomorphism class and not by which element of that class it is.

Another consequence of Theorem~\ref{thm: decisive vs blossoms} is that the decisive graphs form a hereditary class, i.e., one closed under taking induced subgraphs, as we show below. This property may not be apparent from the definition, since decisive graphs are defined in terms of degree sequences $d$, which in turn are defined based on their polytopes $P(d)$.

Given a graph $G$, an \emph{alternating 4-cycle} in $G$ is a configuration on four vertices $\{a,b,c,d\}$ where $ab$ and $cd$ are edges and $bc$ and $da$ are not edges. A \emph{2-switch} is an operation on a graph that takes such an alternating 4-cycle, deletes edges $ab$ and $cd$, and adds edges $bc$ and $da$ to the graph. In studying simple graph realizations of degree sequences, the following lemma of Fulkerson, Hoffman, and McAndrew~\cite{FulkersonEtAl65} is a fundamental tool.

\begin{lem}[\cite{FulkersonEtAl65}] \label{lem: FHM}
Two graphs on the same vertex set have the same degree sequence if and only if one can be obtained from the other via a finite sequence of 2-switches.
\end{lem}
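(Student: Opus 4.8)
The forward (``if'') direction is immediate: a 2-switch on $\{a,b,c,d\}$ deletes $ab,cd$ and adds $bc,da$, so each of $a,b,c,d$ loses exactly one incident edge and gains exactly one, while all other vertices are untouched. Hence every 2-switch preserves the degree at each vertex, and so does any finite composition of them; thus two graphs related by 2-switches have identical degrees at every vertex. The substance of the lemma is the converse, which I would prove by induction on the size of the symmetric difference $D = E(G)\triangle E(H)$, where $G$ and $H$ are two graphs on $[n]$ with $\deg_G(v)=\deg_H(v)$ for every $v$. When $D=\emptyset$ we have $G=H$ and the empty sequence of switches suffices; the inductive step reduces to exhibiting a single 2-switch applied to $G$ that strictly decreases $|D|$.

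To locate such a switch, I would color each edge of $D$ \emph{red} if it lies in $E(G)\setminus E(H)$ and \emph{blue} if it lies in $E(H)\setminus E(G)$. Because $\deg_G(v)=\deg_H(v)$, the edges common to $G$ and $H$ contribute equally to both degrees, so at every vertex the number of incident red edges equals the number of incident blue edges. Starting from any red edge $ab$, this balance at $a$ forces a blue edge $ac\in E(H)\setminus E(G)$, and the balance at $c$ then forces a red edge $cd\in E(G)\setminus E(H)$; one checks that $d$ may be taken distinct from $b$ (the degenerate coincidence being handled symmetrically). The four vertices $a,b,c,d$ now carry the $G$-edges $ab$ and $cd$ together with the $G$-non-edge $ac$. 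Provided $bd\notin E(G)$, the set $\{a,b,c,d\}$ spans an alternating 4-cycle (edges $ab, cd$; non-edges $ac, bd$), and the 2-switch deleting $ab,cd$ and adding $ac,bd$ turns three edges of $D$ (namely $ab$, $cd$, and $ac$) into agreements between $G$ and $H$ while creating at most one new disagreement (at $bd$); hence $|D|$ drops by at least $2$, completing the induction.

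The main obstacle is the blocked case $bd\in E(G)$, in which the quadrilateral $\{a,b,c,d\}$ fails to be an alternating 4-cycle and the switch that would install the desired blue edge $ac$ is unavailable. My plan is to resolve this by a preliminary 2-switch: I would examine the status of $bd$ in $H$ together with the adjacencies $ad$ and $bc$ and perform an auxiliary switch around $b$ and $d$ that either reduces $|D|$ outright or removes the obstructing edge $bd$, reducing to the unblocked configuration above. Equivalently, one can organize the whole argument around a \emph{shortest} alternating red--blue cycle in $D$ (which exists precisely because red- and blue-degrees agree at every vertex): minimality rules out red chords and lets one read off an alternating 4-cycle of $G$ directly from three consecutive cycle edges. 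Making the bookkeeping of this blocked case airtight---guaranteeing that the potential $|D|$ is decreased monotonically and that no auxiliary switch is ever forced to increase it---is the delicate point, and it is exactly the content that Fulkerson, Hoffman, and McAndrew isolate in~\cite{FulkersonEtAl65}.
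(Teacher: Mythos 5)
This lemma is quoted in the paper from Fulkerson, Hoffman, and McAndrew and is not proved there, so there is no internal proof to compare against; the only question is whether your argument stands on its own, and it does not quite. The forward direction and the ``unblocked'' step of the induction on $|E(G)\triangle E(H)|$ are correct: red/blue degree balance does produce the configuration $ab$ red, $ac$ blue, $cd$ red, and when $bd\notin E(G)$ the switch you describe lowers the symmetric difference by at least $2$. The gap is exactly the blocked case $bd\in E(G)$, which you acknowledge but do not close, and your proposed repair via a \emph{shortest} alternating red--blue cycle does not close it either: minimality of such a cycle only excludes chords that lie in $E(G)\triangle E(H)$. A chord $v_1v_4$ belonging to $E(G)\cap E(H)$ is invisible to the symmetric difference, is not excluded by minimality, and still destroys the alternating $4$-cycle in $G$. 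So the assertion that ``minimality rules out red chords and lets one read off an alternating 4-cycle of $G$ directly from three consecutive cycle edges'' is false as stated, and this is precisely the case that carries the content of the lemma. (A smaller unproved claim is that the red edge at $c$ can be chosen with $d\neq b$; if $cb$ is the only red edge at $c$ this fails and needs its own argument.)

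Two standard ways to finish, either of which you could adopt: (i) keep the symmetric-difference induction but allow the reducing $2$-switch to be performed in \emph{either} $G$ or $H$ --- in the blocked sub-case $bd\in E(G)\cap E(H)$ the four pairs $ac, bd$ (edges of $H$) and $ab, cd$ (non-edges of $H$) form an alternating $4$-cycle in $H$, and switching there drops $|E(G)\triangle E(H)|$ by $2$; since $2$-switches are reversible this is legitimate for the induction, though the remaining sub-case $bd\in E(G)\setminus E(H)$ still needs separate handling; or (ii) abandon the symmetric difference entirely and show that every realization can be carried by $2$-switches to a single canonical realization (take a vertex $w$ of maximum degree; if $w\sim x$, $w\not\sim y$, and $\deg(y)\ge\deg(x)$, there is $z\neq w$ with $y\sim z$ and $x\not\sim z$, giving an alternating $4$-cycle on $w,x,z,y$; iterate until $N(w)$ is the set of highest-degree other vertices, delete $w$, and induct on $n$). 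The second route avoids the blocked case altogether and is the cleanest complete proof.
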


\begin{lem} \label{lem: hereditary}
The class of decisive graphs is hereditary, i.e., closed under taking induced subgraphs.
\end{lem}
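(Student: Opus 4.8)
The plan is to reduce to a single vertex deletion and argue the contrapositive, relying on the reformulation from Theorem~\ref{thm: decisive vs blossoms}: a graph is decisive exactly when no realization of its degree sequence contains a $(3,3)$-blossom. Since every induced subgraph arises from iterated single-vertex deletions, it suffices to show that whenever $G$ is decisive and $v \in V(G)$, the graph $G-v$ is decisive; the general claim then follows by induction on $|V(G)|$. So I would assume $G-v$ is \emph{not} decisive and aim to produce a realization of the degree sequence of $G$ that contains a $(3,3)$-blossom, which by Theorem~\ref{thm: decisive vs blossoms} witnesses that $G$ is not decisive either.

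First I would invoke the characterization: since $G-v$ is not decisive, some realization $G'$ of the degree sequence of $G-v$ contains a $(3,3)$-blossom. Because containing a blossom is an isomorphism invariant, I may relabel $G'$ so that its vertex set is $V(G)\setminus\{v\}$ and $\deg_{G'}(u)$ equals the degree of $u$ in $G-v$ for each $u$; this is possible as $G'$ and $G-v$ share the same unlabeled degree sequence. Now I would reattach $v$: form $\hat G$ from $G'$ by adding the vertex $v$ joined to exactly the vertices of $N_G(v)$. The key bookkeeping is that $\hat G$ realizes the degree sequence of $G$. Indeed, each former neighbor of $v$ has $G'$-degree one less than its $G$-degree and gains exactly one edge (to $v$), each non-neighbor retains its degree, and $v$ receives degree $|N_G(v)|$; so all degrees match those of $G$, and $G' = \hat G - v$ is an induced subgraph of $\hat G$.

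The final step, and the one I expect to require the most care, is verifying that the blossom survives the construction. A $(3,3)$-blossom is not an induced-subgraph condition: it constrains only the edge/non-edge status of seven specified pairs among a set of six vertices, leaving the remaining pairs unrestricted. Since $G'$ sits inside $\hat G$ as an induced subgraph, those six vertices and the statuses of the seven relevant pairs are identical in $\hat G$, so $\hat G$ contains the same $(3,3)$-blossom. Hence some realization of the degree sequence of $G$ contains a blossom, and Theorem~\ref{thm: decisive vs blossoms} gives that $G$ is not decisive, completing the contrapositive. The conceptual obstacle throughout is that decisiveness is a property of \emph{all} realizations of $d(G)$ rather than of the single graph $G$, so heredity is not transparent from the definition; the device that overcomes this is reattaching $v$ with its original neighborhood, which simultaneously repairs the degree sequence and, because the blossom condition is monotone under passage to induced supergraphs, preserves the forbidden configuration.
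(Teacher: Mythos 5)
Your proof is correct, but it takes a genuinely different route from the paper's. The paper argues in one step for an arbitrary induced subgraph $H$ of $G$: by Theorem~\ref{thm: decisive vs blossoms} some realization of the degree sequence of $H$ contains a $(3,3)$-blossom, by Lemma~\ref{lem: FHM} that realization is reachable from $H$ by a sequence of 2-switches, and since $H$ is induced in $G$ each alternating 4-cycle used is also alternating in $G$, so the same 2-switches can be performed inside $G$ to produce a blossom-containing realization of the degree sequence of $G$. You instead avoid the Fulkerson--Hoffman--McAndrew lemma entirely: you reduce to a single vertex deletion, take an arbitrary blossom-containing realization $G'$ of the degree sequence of $G-v$, relabel it degree-compatibly, and reattach $v$ along its original neighborhood, checking directly that the degrees are repaired and that the blossom (being a constraint on only seven pairs among six vertices, all lying in the induced copy of $G'$) survives. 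Both arguments lean on the same reformulation via Theorem~\ref{thm: decisive vs blossoms}; the paper's version buys brevity and reuses the 2-switch machinery that recurs throughout the paper, while yours is more self-contained and makes the degree bookkeeping explicit. Two minor remarks: your single-vertex reduction and induction are not actually needed, since the same relabel-and-reattach construction works verbatim for deleting an arbitrary vertex subset at once (glue $G'$ onto the fixed edges of $G$ leaving $W$ and outside $W$); and the relabeling step silently uses that a degree-preserving bijection between the vertex sets of two realizations of the same sequence always exists, which is true but worth a word.
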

\begin{proof}
Suppose that $H$ is a graph that is not decisive. By Theorem~\ref{thm: decisive vs blossoms} and Lemma~\ref{lem: FHM}, there exists a sequence of 2-switches that produces a graph on the same vertex set that contains a $(3,3)$-blossom. If $G$ is any graph containing $H$ as an induced subgraph, then this same sequence of 2-switches, applied to the induced subgraph $H$, creates a $(3,3)$-blossom in $G$, making $G$ not decisive.
\end{proof}

In light of Lemma~\ref{lem: hereditary}, the class of decisive graphs, like all hereditary classes, has a characterization in terms of forbidden induced subgraphs. We use Theorem~\ref{thm: decisive vs blossoms} to begin the search for the forbidden subgraphs: any graph $G$ containing a $(3,3)$-blossom is forbidden, and the proof of Lemma~\ref{lem: hereditary} shows that any graph having the same degree sequence as $G$ is also forbidden.

Beginning with the $(3,3)$-blossoms in Figure~\ref{fig: int blossoms} (assuming now that solid lines represent edges and dotted lines represent non-adjacencies), we consider all possible ways of adding edges. The degree sequences of the resulting graphs are the following:

\begin{equation}\label{eq: forb deg seq}
\begin{array}{cccc}
(1, 1, 1, 1, 1, 1), & (3, 3, 2, 2, 1, 1), & (4, 2, 2, 2, 1, 1), & (4, 4, 3, 3, 2, 2),\\
(2, 2, 1, 1, 1, 1), & (3, 3, 2, 2, 2, 2), & (4, 3, 2, 2, 2, 1), & (4, 4, 3, 3, 3, 1),\\
(2, 2, 2, 2, 1, 1), & (3, 3, 3, 2, 2, 1), & (4, 3, 3, 2, 2, 2), & (4, 4, 3, 3, 3, 3),\\
(2, 2, 2, 2, 2, 2), & (3, 3, 3, 3, 1, 1), & (4, 3, 3, 3, 2, 1), & (4, 4, 4, 3, 3, 2),\\
(3, 2, 2, 1, 1, 1), & (3, 3, 3, 3, 2, 2), & (4, 3, 3, 3, 3, 2), & (4, 4, 4, 4, 3, 3),\\
(3, 2, 2, 2, 2, 1), & (3, 3, 3, 3, 3, 3), & (4, 4, 2, 2, 2, 2), & (4, 4, 4, 4, 4, 4).
\end{array}
\end{equation}

For the rest of the paper, let $\mathcal{B}$ denote the set of all graphs having a degree sequence listed in~\eqref{eq: forb deg seq} (regardless of whether these graphs can be obtained by adding edges to a $(3,3)$-blossom). We remark that we deliberately avoid drawing or otherwise listing the 70 graphs that comprise $\mathcal{B}$. While presenting their degree sequences is certainly more convenient, we also note that our proof of Theorem~\ref{thm: decisive graph equivalences} will refer more commonly to the degree sequences of certain induced subgraphs, rather than to the specific isomorphism class of a realization. We shall also have more to say later about the notion of ``forbidden degree sequences'' in Section~\ref{sec: six}.

Given a graph class $\mathcal{F}$, we say a graph $G$ is \emph{$\mathcal{F}$-free} if no induced subgraph of $G$ is isomorphic to an element of $\mathcal{F}$. In the following, the symbols $+$ and $\vee$ respectively indicate a disjoint union and a join. The graph $U$ is the unique graph with degree sequence $(4,2,2,2,2,2)$; it and its complement $\overline{U}$ are illustrated in Figure~\ref{fig: U}. A graph $G$ is \emph{split} if its vertex set can be partitioned into an independent set $V_1$ and a clique $V_2$.
\begin{figure}
\centering
\includegraphics{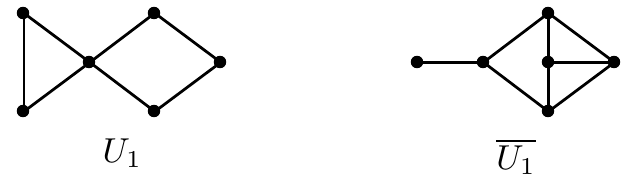}
\caption{The graphs $U$ and $\overline{U}$}
\label{fig: U}
\end{figure}

\begin{thm} \label{thm: decisive graph equivalences}
Let $d$ be a graphic list. The following are equivalent and characterize decisive sequences and graphs:
\begin{enumerate}
\item[\textup{(1)}] None of the realizations of $d$ contains a $(3,3)$-blossom.
\item[\textup{(2)}] Every realization of $d$ is $\mathcal{B}$-free.
\item[\textup{(3)}] $d$ has a $\mathcal{B}$-free realization.
\item[\textup{(4)}] $d$ has a realization $G$ for which there exists a partition $V_1,V_2,V_3$ of $V(G)$ such that
\begin{enumerate}
\item[\textup{(i)}] $V_1$ is an independent set and $V_2$ is a clique;
\item[\textup{(ii)}] each vertex in $V_3$ is adjacent to every vertex of $V_2$ and to none of the vertices in $V_1$; and
\item[\textup{(iii)}] $G[V_3]$ is split or has fewer than six vertices or is one of $U$, $\overline{U}$, $K_2+K_{1,m}$, or $(K_{m}+K_1)\vee 2K_1$ for some $m \geq 3$.
\end{enumerate}
\item[\textup{(5)}] Every realization of $d$ has the form described in (4).
\end{enumerate}
\end{thm}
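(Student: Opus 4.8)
The plan is to prove the cycle of implications $(1)\Rightarrow(2)\Rightarrow(3)\Rightarrow(4)\Rightarrow(5)\Rightarrow(1)$, noting that $(5)\Rightarrow(4)$ is immediate since $d$ is graphic and so has at least one realization. The engine for the easy equivalences is a dictionary between $(3,3)$-blossoms and the family $\mathcal{B}$. On one hand, the six vertices of any $(3,3)$-blossom in a graph $G$ induce a subgraph whose degree sequence appears in~\eqref{eq: forb deg seq}, so $G$ is not $\mathcal{B}$-free. On the other hand, if $G$ has an induced subgraph $H\in\mathcal{B}$, then $H$ shares its degree sequence with some graph obtained by adding edges to a $(3,3)$-blossom, and by Lemma~\ref{lem: FHM} a sequence of $2$-switches confined to $V(H)$ carries $G$ to a realization of the same degree sequence that does contain a $(3,3)$-blossom. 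Taken together these two observations give $(1)\Leftrightarrow(2)$ at once (no realization has a blossom iff every realization is $\mathcal{B}$-free), and $(2)\Rightarrow(3)$ is trivial.

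The substantive step is $(3)\Rightarrow(4)$, which I would phrase as the stronger assertion that every $\mathcal{B}$-free graph $G$ itself admits the partition in (4). Exploiting Lemma~\ref{lem: hereditary} and the split/pseudo-split intuition behind (i)--(ii), I would first peel off a clique $V_2$ and an independent set $V_1$ in the manner of a split decomposition, collecting into $V_3$ the vertices that are forced neither low nor high; the join conditions in (ii) would then be enforced by showing that any vertex of $V_3$ adjacent to a vertex of $V_1$, or missing a vertex of $V_2$, completes together with suitable other vertices to a six-vertex induced subgraph whose degree sequence lies in~\eqref{eq: forb deg seq}. The remaining and most delicate task is to pin down $G[V_3]$: here a finite case analysis, organized by the degree sequence of $G[V_3]$ rather than its isomorphism type, must show that $\mathcal{B}$-freeness forces $G[V_3]$ to be split, to have fewer than six vertices, or to be one of the sporadic graphs $U$, $\overline{U}$, $K_2+K_{1,m}$, or $(K_m+K_1)\vee 2K_1$. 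I expect this classification to be the main obstacle, since it is precisely where the explicit list of $70$ degree sequences does its work.

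To close the cycle I would handle $(4)\Rightarrow(5)$ and $(5)\Rightarrow(1)$. The implication $(5)\Rightarrow(1)$ is a direct local check: a graph of the form in (4) contains no $(3,3)$-blossom, since such a blossom carries two vertex-disjoint odd cycles, and the rigid adjacencies among $V_1$, $V_2$, $V_3$ together with the restricted shape of $G[V_3]$ leave no room to embed them. The subtler point in the closing is $(4)\Rightarrow(5)$, which lifts a single structured realization to all of them. For this I would show that a graph $G$ with the partition in (4) has a \emph{decisive} degree sequence: were some realization of $d=\deg(G)$ to contain a $(3,3)$-blossom, then by Theorem~\ref{thm: vtcs of S} the polytope $P(d)$ would have a nonintegral vertex, and the switching argument from the proof of Theorem~\ref{thm: decisive vs blossoms} would produce a non-$\mathcal{B}$-free realization of $d$; I would contradict this using the numerical constraints that (4) places on $d$ (the degrees in $V_1\cup V_2$ nest in threshold fashion while $V_3$ contributes only a bounded deviation), arguing that such a distribution of degrees cannot support two vertex-disjoint odd cycles in any fractional realization. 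Decisiveness of $d$ then yields, via $(1)\Leftrightarrow(2)$, that every realization of $d$ is $\mathcal{B}$-free, whereupon applying the strengthened form of $(3)\Rightarrow(4)$ to each realization shows each has the form in (4), which is exactly (5).
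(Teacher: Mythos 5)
Your outline tracks the paper's cycle $(1)\Rightarrow(2)\Rightarrow(3)\Rightarrow(4)\Rightarrow(5)\Rightarrow(1)$ for most of its length: the dictionary between $(3,3)$-blossoms and $\mathcal{B}$ via Lemma~\ref{lem: FHM} is exactly the paper's $(1)\Rightarrow(2)$ (your extra observation that the six vertices of a blossom themselves induce a member of $\mathcal{B}$ is correct and gives $(2)\Rightarrow(1)$ directly, a shortcut the paper does not need), and your plan for $(3)\Rightarrow(4)$ --- peel off $V_1,V_2$ via the canonical (Tyshkevich) decomposition, then classify the indecomposable core $G[V_3]$ by a finite case analysis driven by the 24 forbidden degree sequences --- is the paper's argument in outline, though you leave the classification itself, which is the bulk of the section, unexecuted.

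The genuine gap is your treatment of $(4)\Rightarrow(5)$. You propose to show that a graph with the partition in (4) has a decisive degree sequence by ``numerical constraints'' that supposedly prevent any fractional realization from supporting two vertex-disjoint odd cycles of $1/2$-edges; no such argument is given, and it cannot be waved through. The numerical characterization of decisive sequences is Theorem~\ref{thm: decisive seqs}, which the paper \emph{derives from} Theorem~\ref{thm: decisive graph equivalences}; invoking it (or an unproved surrogate for it) here is circular. Knowing that the single realization $G$ contains no $(3,3)$-blossom says nothing a priori about the other realizations, which is precisely the content of $(4)\Rightarrow(5)$. The paper closes this gap combinatorially: by Lemma~\ref{lem: FHM} every realization is reached from $G$ by $2$-switches, and one checks that a single $2$-switch preserves the partition --- the four switched vertices must lie entirely in $V_1\cup V_2$ or entirely in $V_3$, a switch inside $V_1\cup V_2$ keeps $V_1$ independent and $V_2$ a clique, and a switch inside $V_3$ preserves splitness, indecomposability, the vertex count, and the isomorphism class of each of $U$, $\overline{U}$, $K_2+K_{1,m}$, $(K_m+K_1)\vee 2K_1$. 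That invariance argument is the missing ingredient; without it (or an honest independent proof of decisiveness from the degree data), your cycle of implications does not close.
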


\noindent In the rest of this section we prove (1) $\Rightarrow$ (2) $\Rightarrow$ (3) $\Rightarrow$ (4) $\Rightarrow$ (5) $\Rightarrow$ (1).

\bigskip
\noindent \emph{\textup{(1)} implies \textup{(2)}:} Let $G$ be a realization of $d$. 
Suppose $G$ contains an element $F$ of $\mathcal{B}$ as an induced subgraph. By definition, there is some graph $F'$  in $\mathcal{B}$ that has the same degree sequence as $F$ and contains a $(3,3)$-blossom. By Lemma~\ref{lem: FHM}, there exists a sequence of 2-switches that produces $F'$ from $F$; performing these 2-switches in $G$ produces a $(3,3)$-blossom in $G$, a contradiction.

\bigskip
\noindent \emph{\textup{(2)} implies \textup{(3)}:} Immediate, since $d$ is graphic.

\bigskip
\noindent \emph{\textup{(3)} implies \textup{(4)}:} We first define a useful notion. Call $G$ \emph{decomposable} if there exist sets $V_1,V_2,V_3$ partitioning $V(G)$ satisfying (i) and (ii) of (4), with the additional requirement that $V_1 \cup V_2$ and $V_3$ are both nonempty. Graphs that are not decomposable are \emph{indecomposable}.

Decomposable graphs have appeared in the work of several authors with varying terminology and notation. Notably, in~\cite{Tyshkevich00} R. Tyshkevich developed the idea of decomposability to produce a canonical decomposition of graphs that has a useful analogue in terms of degree sequences; we will encounter some of these ideas in the next section. For our purposes, the following weaker decomposition will be sufficient.

\begin{thm}[\cite{Tyshkevich00}]\label{thm: unique decomp}
For every graph $G$ there is a partition $V_1,V_2,V_3$ of $V(G)$ satisfying \textup{(i)} and \textup{(ii)} of Theorem~\ref{thm: decisive graph equivalences}(4) such that $V_3$ is not empty and $G[V_3]$ is indecomposable. This partition is unique in the sense that if $V_1,V_2,V_3$ and $V'_1,V'_2,V'_3$ are two partitions with these properties, then $G[V_3]$ and $G[V_3']$ are isomorphic, and there is an isomorphism from $G[V_1\cup V_2]$ to $G[V'_1 \cup V'_2]$ that bijectively maps $V_1$ onto $V_1'$ and $V_2$ onto $V_2'$.
\end{thm}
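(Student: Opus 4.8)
The plan is to prove Theorem~\ref{thm: unique decomp} in two parts: first establishing existence of the desired partition, then its uniqueness. For \textbf{existence}, I would argue by iterating the decomposition. Start with the trivial partition $V_1=V_2=\emptyset$ and $V_3=V(G)$. If $G[V_3]$ is already indecomposable, we are done. Otherwise, by definition of decomposability there is a partition of $V_3$ into nonempty pieces $W_1,W_2,W_3$ satisfying (i) and (ii) (with $W_1\cup W_2$ and $W_3$ both nonempty); move $W_1$ into $V_1$ and $W_2$ into $V_2$, and set $V_3:=W_3$. One must check that the new triple $(V_1,V_2,V_3)$ still satisfies (i) and (ii) globally---this uses that every vertex of the old $V_3$ (in particular those now in $W_1,W_2,W_3$) was already adjacent to all of the old $V_2$ and none of the old $V_1$, so the adjacency requirements are preserved when the new independent/clique vertices are appended. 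Since $|V_3|$ strictly decreases with each step and $V_3$ stays nonempty (we never absorb $W_3$ entirely), this process terminates with $G[V_3]$ indecomposable.

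For \textbf{uniqueness}, suppose $V_1,V_2,V_3$ and $V_1',V_2',V_3'$ are two partitions with the stated properties. The key structural fact I would extract is that the vertices of $V_1\cup V_2$ (and likewise $V_1'\cup V_2'$) are exactly the vertices of $G$ that are ``dominating-or-isolated relative to the rest,'' in the sense that each such vertex is either adjacent to all of $V_3$ (if it lies in $V_2$) or nonadjacent to all of $V_3$ (if it lies in $V_1$). The heart of the argument is to show $V_3=V_3'$. I would show $V_3\subseteq V_3'$ (and symmetrically the reverse): if some vertex $u\in V_3\setminus V_3'$, then $u\in V_1'\cup V_2'$, so $u$ is adjacent to all of $V_3'$ or to none of it. Combined with the indecomposability of $G[V_3]$, this should force a contradiction, because such a vertex $u$ would witness a decomposition of $G[V_3]$: removing all such ``uniform'' vertices from $V_3$ must leave the indecomposable core unchanged. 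Making this precise is where the real work lies---I expect to need the observation that if $G[V_3]$ is indecomposable and nonempty, then no vertex of $V_3$ can be adjacent to all other vertices of $V_3$ or nonadjacent to all other vertices of $V_3$ without violating indecomposability (unless $|V_3|=1$, a case handled separately).

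Once $V_3=V_3'$ is established, the isomorphism claims follow: $G[V_3]=G[V_3']$ trivially gives the first isomorphism. For $G[V_1\cup V_2]\cong G[V_1'\cup V_2']$ with $V_1\mapsto V_1'$ and $V_2\mapsto V_2'$, I would argue that membership in $V_1$ versus $V_2$ is forced by adjacency to $V_3$: since $V_3$ is nonempty, a vertex of $V_1\cup V_2$ lies in $V_2$ exactly when it is adjacent to every vertex of $V_3$, and in $V_1$ exactly when it is adjacent to no vertex of $V_3$. Hence $V_2$ and $V_2'$ consist of the same ``complete-to-$V_3$'' vertices and $V_1,V_1'$ of the same ``anticomplete-to-$V_3$'' vertices, which would actually give $V_1=V_1'$ and $V_2=V_2'$ outright, yielding the identity isomorphism. (The weaker statement with merely an isomorphism is all that is claimed, so this is more than enough.)

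The \textbf{main obstacle} I anticipate is the core step of the uniqueness proof: cleanly showing that the indecomposable part is ``canonical,'' i.e., that a vertex lying in $V_3$ under one partition cannot be a uniform (complete or anticomplete to the core) vertex under another. The subtlety is that $u$ being complete to $V_3'$ does not immediately say how $u$ relates to $V_3=V_3\cap V_3'$ together with the part of $V_3$ lying outside $V_3'$; I will need to track adjacencies carefully across the overlap $V_3\cap (V_1'\cup V_2')$ and invoke indecomposability of $G[V_3]$ to rule out any nontrivial partition. A careful case analysis on whether such a $u$ is complete or anticomplete to $V_3'$, combined with the symmetric roles of the two partitions, should close this gap.
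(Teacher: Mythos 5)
The paper does not prove this theorem---it is imported verbatim from Tyshkevich~\cite{Tyshkevich00}---so there is no internal proof to compare against; your proposal has to stand on its own. Your existence argument is fine: peeling off $W_1,W_2$ at each stage preserves conditions (i) and (ii) because the old $V_3$ was already complete to the old $V_2$ and anticomplete to the old $V_1$, and the process terminates since $|V_3|$ strictly decreases while $W_3$ stays nonempty.

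The uniqueness argument, however, has a genuine gap: the statement you aim to prove, $V_3=V_3'$ as sets (and hence $V_1=V_1'$, $V_2=V_2'$), is false. Take $G=K_3$ on $\{a,b,c\}$: both $(V_1,V_2,V_3)=(\emptyset,\{a,b\},\{c\})$ and $(\emptyset,\{a,c\},\{b\})$ satisfy (i), (ii), with $G[V_3]\cong K_1$ indecomposable, yet $V_3\neq V_3'$. The same happens for $K_2$, $P_3$, and generally whenever the indecomposable core is a single vertex---which is exactly why Tyshkevich states uniqueness only up to isomorphism. You flag $|V_3|=1$ as ``a case handled separately,'' but this is not a peripheral technicality: it is the case in which your whole strategy (set equality, ``identity isomorphism,'' ``more than enough'') breaks down, and you give no argument for it. Separately, even when $|V_3|\geq 2$, your claim that a vertex $u\in V_3\setminus V_3'$ ``witnesses a decomposition of $G[V_3]$'' is incomplete: the candidate decomposition $\bigl(V_3\cap V_1',\,V_3\cap V_2',\,V_3\cap V_3'\bigr)$ requires $V_3\cap V_3'\neq\emptyset$ to qualify as a decomposition, and the sub-case $V_3\cap V_3'=\emptyset$ (where $G[V_3]$ merely inherits a split partition, which does not by itself contradict indecomposability---$P_4$ is split yet indecomposable) is precisely the ``main obstacle'' you name but do not resolve. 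To repair the proof you would need to (a) prove set-uniqueness of $V_3$ only under the hypothesis $|V_3|\geq 2$, including the disjointness sub-case, and (b) give a genuine argument in the $K_1$-core case that any two valid partitions yield isomorphic split parts with $V_1\mapsto V_1'$, $V_2\mapsto V_2'$.
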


Assume now that $G$ is an arbitrary $\mathcal{B}$-free graph, and let $V_1,V_2,V_3$ be a partition of $V(G)$ as in Theorem~\ref{thm: unique decomp}. Let $H=G[V_3]$; by assumption, $H$ is indecomposable. Assume that $H$ is not split (otherwise, $G$ is split) and contains at least six vertices. We must show that $H$ is one of the graphs listed in part (iii) of condition (4).

\begin{fact} If $H$ contains an induced subgraph isomorphic to any element of \[\{C_5,P_5,\textup{house}, K_2 + K_3, K_{2,3}\},\] then $H$ is equal to that subgraph.
\end{fact}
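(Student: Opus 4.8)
The plan is to argue by contradiction: assuming $H$ contains an induced copy of one of the five graphs $F$ (each on five vertices) together with at least one further vertex, I would exhibit a partition of $V(H)$ witnessing decomposability, contradicting the choice of $H$ as the indecomposable part $G[V_3]$ from Theorem~\ref{thm: unique decomp}. Note first that $H$ inherits $\mathcal{B}$-freeness from $G$, since any induced subgraph of a $\mathcal{B}$-free graph is $\mathcal{B}$-free; this is the only property of $G$ I would use. So suppose $F$ is induced in $H$ and some vertex $v$ lies outside $V(F)$.

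First I would classify how a single outside vertex can attach to $F$. For any $v \notin V(F)$, the set $V(F)\cup\{v\}$ induces a six-vertex graph, so $\mathcal{B}$-freeness forces its degree sequence to avoid the list in~\eqref{eq: forb deg seq}. Running through all choices of $N(v)\cap V(F)$ up to the automorphisms of $F$, I expect every pattern other than ``$v$ adjacent to all of $V(F)$'' and ``$v$ adjacent to none of $V(F)$'' to produce a sequence appearing in~\eqref{eq: forb deg seq}; for instance a single neighbor typically yields $(3,2,2,2,2,1)$ or $(4,3,2,2,2,1)$, and larger neighborhoods hit sequences such as $(3,3,3,3,2,2)$ or $(4,3,3,3,3,2)$. (The ``all'' and ``none'' patterns escape because they force a degree $5$ or a degree $0$, neither of which occurs in~\eqref{eq: forb deg seq}.) Accepting this, I would partition the outside vertices into the set $B$ adjacent to all of $V(F)$ and the set $C$ adjacent to none, so that $V(H)=V(F)\cup B\cup C$.

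Next I would show $B$ is a clique and $C$ is independent. Given $u_1,u_2\in B$, I would select a four-element $S\subseteq V(F)$ whose induced subgraph is well understood---$P_4$ for $C_5$ and $P_5$, $C_4$ for the house and $K_{2,3}$, and $2K_2$ for $K_2+K_3$---and observe that if $u_1u_2$ were a non-edge then $\{u_1,u_2\}\cup S$ would realize a forbidden sequence (e.g.\ $(4,4,4,4,3,3)$ or $(4,4,4,4,4,4)$), forcing $u_1u_2$ to be an edge. The symmetric computation with $c_1,c_2\in C$ shows that a $c_1c_2$ edge forces a forbidden sequence (e.g.\ $(1,1,1,1,1,1)$ or $(2,2,1,1,1,1)$), so $C$ is independent. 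Crucially, the decomposition I am heading toward places no constraint on edges between $B$ and $C$, so I need not analyze them.

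Finally I would take $V_1=C$, $V_2=B$, and $V_3=V(F)$. Then $V_1$ is independent, $V_2$ is a clique, every vertex of $V_3$ is adjacent to all of $B$ and to none of $C$, and $V_1\cup V_2=B\cup C$ is nonempty because $v$ exists while $V_3=V(F)$ is nonempty; this is exactly a decomposition in the sense of conditions (i) and (ii) of~(4), contradicting the indecomposability of $H$. Hence no outside vertex exists and $H=F$. The main obstacle is simply the bookkeeping in the attachment analysis: verifying that only the all/none patterns avoid~\eqref{eq: forb deg seq}, which is routine for the vertex-transitive $C_5$ and $K_{2,3}$ but requires using the full automorphism group to keep the case count manageable for the less symmetric $P_5$, house, and $K_2+K_3$, together with care to recompute each degree sum correctly.
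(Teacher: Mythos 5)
Your proposal is correct and follows essentially the same route as the paper: force each outside vertex to be adjacent to all or none of the five-vertex subgraph via the forbidden degree sequences in~\eqref{eq: forb deg seq}, show the ``all'' vertices form a clique and the ``none'' vertices an independent set, and conclude that $H$ would be decomposable, contradicting Theorem~\ref{thm: unique decomp}. The only difference is one of bookkeeping: you run the attachment analysis directly for all five graphs, whereas the paper does it for $C_5$, $P_5$, and the house and then disposes of $K_2+K_3$ and $K_{2,3}$ by observing that any one-vertex extension of these realizes the same degree sequence as a corresponding extension of $P_5$ or the house.
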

\begin{proof}
Let us suppose that $H$ contains an induced 5-cycle  $v_1v_2v_3v_4v_5v_1$;. If $w$ is a vertex of $H$ not on the 5-cycle, then $w$ must be adjacent to all or none of the vertices in $C=\{v_1,v_2,v_3,v_4,v_5\}$, since otherwise $G[C \cup \{w\}]$ has one of \[(3,2,2,2,2,1),\; (3,3,2,2,2,2),\; (3,3,3,3,2,2), \text{ or }(4,3,3,3,3,2)\] as its degree sequence and hence belongs to $\mathcal{B}$. Of the vertices of $H$ not in $C$, let $A$ denote those having no neighbor in $C$, and let $B$ denote those adjacent to every vertex of $C$. Since both $K_2+P_4$ and $2K_1 \vee P_4$ are elements of $\mathcal{B}$ (their degree sequences appear in~\eqref{eq: forb deg seq}) and $C_5$ induces $P_4$, the sets $A$ and $B$ must be an independent set and a clique, respectively. However, then $H$ is decomposable with vertex set partition $A,B,C$, a contradiction, unless $A \cup B$ is empty. hence $H \cong C_5$.

Similar arguments apply if $H$ contains an induced subgraph isomorphic to $P_5$ or the house graph. Note also that if a graph $J$ is formed by adding a vertex and some edges to $K_2+K_3$ or $K_{2,3}$, then a graph $J'$ with the same degree sequence as $J$ can be formed by adding a vertex and some edges to $P_5$ or the house graph, respectively, and the same number of edges will be added to produce $J'$ as were added for $J$. It follows that if $H$ contains an induced subgraph isomorphic to any element of $\{C_5,P_5,\textup{house}, K_2 + K_3, K_{2,3}\}$, then $H$ is equal to that subgraph.
\end{proof}

Since each element of $\{C_5,P_5,\textup{house}, K_2 + K_3, K_{2,3}\}$ has five vertices, by the previous fact we may assume for convenience that $H$ induces none of these subgraphs. F\"{o}ldes and Hammer~\cite{FoldesHammer76} showed that a graph is split if and only if it is $\{2K_2,C_4,C_5\}$-free. Since $H$ is not split, it must contain $2K_2$ or $C_4$ as an induced subgraph. Assume that $H$ induces $2K_2$, and let $ab$ and $cd$ be the edges of an induced copy of $2K_2$. Let $C=\{a,b,c,d\}$.

\begin{fact}
Any vertex of $H$ not in $C$ is adjacent to exactly 1 or 3 vertices in $C$.
\end{fact}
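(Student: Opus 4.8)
The plan is to classify each vertex $w \in V(H) \setminus C$ by the number $k$ of its neighbors in $C$ and to rule out $k \in \{0,2,4\}$. Write the two edges of the induced $2K_2$ as $ab$ and $cd$. The case $k=2$ is the easy one: if the two neighbors of $w$ are the endpoints of a single edge (say $a$ and $b$), then $G[C \cup \{w\}] \cong K_2 + K_3$, while if $w$ is adjacent to one endpoint of each edge, then $G[C \cup \{w\}] \cong P_5$. Since $H$ has at least six vertices, the preceding Fact guarantees that $H$ induces neither $P_5$ nor $K_2+K_3$, so $k \neq 2$.

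The cases $k \in \{0,4\}$ are the crux, and here examining a single extra vertex does not suffice: a vertex adjacent to none (resp. all) of $C$ can be attached so that the resulting six-vertex induced subgraph has degree sequence $(5,2,2,2,2,1)$ (resp. $(4,2,2,2,2,0)$), neither of which lies in~\eqref{eq: forb deg seq}. To get around this I would argue globally using indecomposability. Let $A$ and $D$ be the sets of vertices of $V(H) \setminus C$ adjacent to none and to all of $C$, respectively; the goal is to prove $A = D = \emptyset$. Examining the six-vertex subgraphs $G[C \cup \{u,u'\}]$ and using that $H$ is $\mathcal{B}$-free, I would record their degree sequences: two adjacent vertices of $A$ yield $(1,1,1,1,1,1)$ and two nonadjacent vertices of $D$ yield $(4,4,3,3,3,3)$, so $A$ is independent and $D$ is a clique. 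Moreover, a vertex $w \in A$ together with a neighbor $x$ (necessarily outside $C$) produces one of $(1,1,1,1,1,1)$, $(2,2,1,1,1,1)$, $(4,2,2,2,1,1)$ unless $x$ is adjacent to all of $C$; since these three sequences appear in~\eqref{eq: forb deg seq}, every neighbor of an $A$-vertex lies in $D$. Symmetrically, a vertex $v \in D$ together with a nonneighbor $y \notin C$ produces one of $(4,3,2,2,2,1)$, $(4,3,3,3,3,2)$, $(4,4,3,3,3,3)$ unless $y$ is adjacent to none of $C$, so every nonneighbor of a $D$-vertex lying outside $C$ belongs to $A$.

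With these four structural facts in hand the contradiction is immediate. Suppose $A \cup D \neq \emptyset$, and set $V_1 = A$, $V_2 = D$, and $V_3 = V(H) \setminus (A \cup D)$. Then $V_1$ is independent and $V_2$ is a clique; since every neighbor of an $A$-vertex lies in $D$ there are no edges from $A$ to $V_3$, and since every vertex of $V_3$ outside $C$ fails to be a nonneighbor of any $D$-vertex while the vertices of $C$ are adjacent to all of $D$, every vertex of $V_3$ is complete to $D$. Thus $V_1,V_2,V_3$ witness that $H$ is decomposable, with $V_3 \supseteq C$ nonempty, contradicting the indecomposability of $H$. Hence $A = D = \emptyset$, so no vertex outside $C$ has $k \in \{0,4\}$; combined with $k \neq 2$ this shows every such vertex has exactly $1$ or $3$ neighbors in $C$. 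The main obstacle is precisely the pair $k \in \{0,4\}$: unlike the other cases in this section, they cannot be dispatched by a local forbidden-subgraph computation and instead require assembling the offending vertices into a global decomposition that indecomposability forbids.
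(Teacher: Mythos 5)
Your proof is correct and follows essentially the same route as the paper's: rule out two neighbors in $C$ via an induced $P_5$ or $K_2+K_3$, then show (using the forbidden degree sequences in~\eqref{eq: forb deg seq}) that the set $A$ of vertices with no neighbor in $C$ and the set of vertices adjacent to all of $C$ would yield a decomposition of $H$, contradicting indecomposability unless both sets are empty. The only difference is presentational—you separately verify that $A$ is independent and $D$ is a clique, whereas the paper deduces these implicitly from the neighbor/non-neighbor containments.
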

\begin{proof}
If any vertex $w$ of $H$ is adjacent to exactly two vertices from $C$, then $H[C \cup \{w\}]$ is isomorphic to either $P_5$ or $K_2+K_3$, a contradiction. Let $A$ be the set of all vertices of $H$ having no neighbors in $C$, and let $B$ the set of all vertices adjacent to every vertex in $C$. If $t$ is a vertex of $A$ and $u$ is a neighbor of $t$, then $H[C \cup \{t,u\}]$ has a degree sequence from~\eqref{eq: forb deg seq} unless $u$ is adjacent to every vertex of $C$, forcing $u \in B$. Similarly, if $v$ is a vertex of $B$ and $w$ is a vertex of $H$ not adjacent to $v$, then $w$ cannot have any neighbor in $C$ and hence belongs to $A$. It follows that $A,B,V(H)-(A\cup B)$ is a partition of $V(H)$ showing $H$ to be decomposable unless $A=B=\emptyset$.
\end{proof}

For $k \in \{1,3\}$, let $N_k$ denote the set of vertices of $H-C$ that have exactly $k$ neighbors in $C$.

\begin{fact}
The vertices in $N_1$ form an independent set and all have the same neighbor in $C$.
\end{fact}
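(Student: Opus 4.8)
The plan is to prove both assertions at once by examining, for any two vertices of $N_1$, the six-vertex induced subgraph they span together with $C$, and to rule out every adjacency pattern other than the desired one by matching its degree sequence against the list~\eqref{eq: forb deg seq}. Throughout I write the induced $2K_2$ as the edges $ab$ and $cd$ on $C=\{a,b,c,d\}$, and recall that every vertex of $N_1$ has exactly one neighbor in $C$. Fixing two distinct vertices $w_1,w_2\in N_1$, note that since $w_1,w_2\notin C$ the set $C\cup\{w_1,w_2\}$ has six vertices, so $H[C\cup\{w_1,w_2\}]$ is subject to the hypothesis that $H$ is $\mathcal{B}$-free; this is the only tool the argument needs.

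The automorphism group of $2K_2$ acts transitively on $\{a,b,c,d\}$, so I may assume the unique neighbor of $w_1$ in $C$ is $a$; the stabilizer of $a$ still permits the swap $c\leftrightarrow d$. The relationship between $w_1$'s and $w_2$'s neighbors in $C$ therefore falls, up to this symmetry, into one of three cases: $w_2$ is adjacent to $a$ (the same vertex), to $b$ (the other endpoint of the same edge), or to $c$ (an endpoint of the other edge). Since the degree sequence of $H[C\cup\{w_1,w_2\}]$ depends only on this relationship and on whether $w_1w_2$ is an edge, I would compute it in the resulting six subcases. For instance, if $w_2$ is adjacent to $a$ and $w_1w_2$ is an edge, then $a$ has degree $3$ and $w_1,w_2$ have degree $2$, giving $(3,2,2,1,1,1)$; if $w_2$ is adjacent to $b$, the edge and non-edge possibilities yield $(2,2,2,2,1,1)$ and $(2,2,1,1,1,1)$; and if $w_2$ is adjacent to $c$, the same two sequences arise.

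Each of these sequences appears in~\eqref{eq: forb deg seq}, so the corresponding induced subgraph lies in $\mathcal{B}$, contradicting $\mathcal{B}$-freeness. The sole surviving subcase is that $w_2$ is adjacent to $a$ and $w_1w_2$ is a non-edge, which produces the degree sequence $(3,1,1,1,1,1)$, not on the list. Since $w_1$ and $w_2$ were arbitrary, every vertex of $N_1$ has the same (namely $a$'s) unique neighbor in $C$, and no two vertices of $N_1$ are adjacent, which is exactly the claim. I expect the only real work to be the case bookkeeping: one must check that the symmetry reduction is legitimate and that all six subcases have been accounted for. No appeal to the five-vertex obstructions or to indecomposability is needed here, as the six-element forbidden list does all the work.
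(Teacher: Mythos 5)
Your proposal is correct and follows essentially the same route as the paper: for any two vertices of $N_1$, compute the degree sequence of the six-vertex induced subgraph $H[C\cup\{w_1,w_2\}]$ and observe that every configuration other than the desired one yields $(2,2,1,1,1,1)$, $(2,2,2,2,1,1)$, or $(3,2,2,1,1,1)$, all of which appear in~\eqref{eq: forb deg seq}. Your version merely makes the symmetry reduction and the six-subcase bookkeeping explicit where the paper compresses them into two sentences.
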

\begin{proof}
If $v$ and $w$ are vertices of $N_1$ with differing neighbors in $C$, then $H[C \cup \{v,w\}]$ has degree sequence $(2,2,1,1,1,1)$ or $(2,2,2,2,1,1)$ and hence belongs to $\mathcal{B}$, a contradiction. If $v$ and $w$ are adjacent and have the same neighbor in $C$, then $H[C \cup \{v,w\}]$ has degree sequence $(3,2,2,1,1,1)$, another contradiction.
\end{proof}

\begin{fact}
The vertices in $N_3$ form an independent set and all have the same three neighbors in $C$.
\end{fact}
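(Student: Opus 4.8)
The plan is to mirror the structure of the immediately preceding fact about $N_1$, using the same strategy of examining the induced subgraph on $C$ together with one or two vertices of $N_3$ and showing that any violation produces a degree sequence in~\eqref{eq: forb deg seq}. Recall that $C=\{a,b,c,d\}$ induces $2K_2$ with edges $ab$ and $cd$, and that each vertex of $N_3$ is adjacent to exactly three of the four vertices of $C$. There are four possible ``adjacency patterns'' for a vertex of $N_3$, according to which single vertex of $C$ it misses.

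First I would show that all vertices of $N_3$ have the \emph{same} three neighbors in $C$, equivalently that they all miss the same vertex of $C$. Suppose $v,w \in N_3$ miss different vertices of $C$; the heart of the argument is to check each essentially distinct pairing of missed vertices. Because $C$ induces $2K_2$ rather than an edgeless or complete graph, the missed vertex can lie in the same edge as, or a different edge from, the other missed vertex, so up to the symmetry swapping $a \leftrightarrow b$, $c \leftrightarrow d$, and $\{a,b\} \leftrightarrow \{c,d\}$ there are only a couple of cases. In each case I would compute the degree sequence of $H[C \cup \{v,w\}]$ for both possibilities ($vw$ an edge and $vw$ a non-edge) and verify it appears in~\eqref{eq: forb deg seq}; for instance, if $v$ misses $a$ and $w$ misses $c$, the five common neighbors among $C$ are pushed up in degree and one quickly lands on a sequence such as $(4,4,3,3,2,2)$ or a nearby entry. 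This forces all of $N_3$ to share one missed vertex, hence one common set of three neighbors in $C$.

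Next I would show $N_3$ is independent. Assume $v,w \in N_3$ are adjacent; by the previous paragraph they have the same three neighbors in $C$, so $H[C \cup \{v,w\}]$ consists of $2K_2$ on $C$ together with two adjacent vertices each joined to the same three vertices of $C$. I would compute this degree sequence and confirm it lies in~\eqref{eq: forb deg seq} (it should be one of the denser six-term sequences, e.g.\ around $(4,4,3,3,3,3)$ or $(4,4,4,3,3,2)$, matching the degrees forced when two high-degree vertices are added), contradicting $\mathcal{B}$-freeness. Combining the two paragraphs gives the claim.

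The main obstacle is purely bookkeeping: making sure the case analysis for ``different missed vertices'' is genuinely exhaustive once the $2K_2$-symmetry is quotiented out, and that every resulting degree sequence is checked against the full list~\eqref{eq: forb deg seq} rather than overlooked. The only subtlety beyond routine degree counting is tracking which vertex of $C$ is missed relative to the edge structure $ab,cd$, since the two edges of $2K_2$ are interchangeable but the two endpoints within an edge are also interchangeable; I expect that after fixing a representative for each symmetry class, at most two or three distinct patterns need explicit verification, and each is dispatched by a one-line degree computation. No deeper structural input is needed here—the preceding facts have already pinned down that $N_3$-vertices attach to $C$ in one of only four ways, so the proof is a short finite check.
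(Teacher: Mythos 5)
Your proposal takes essentially the same route as the paper: in each violating scenario one computes the degree sequence of $H[C \cup \{v,w\}]$ and checks it against~\eqref{eq: forb deg seq}, the paper recording $(3,3,3,3,2,2)$ or $(4,4,3,3,2,2)$ when $v$ and $w$ have differing neighborhoods in $C$ (which, as you note, covers both the same-edge and different-edge patterns of missed vertices) and $(4,4,3,3,3,1)$ when $v$ and $w$ are adjacent with the same three neighbors. Your guessed sequence in the adjacency case is numerically off --- the correct one is $(4,4,3,3,3,1)$ rather than $(4,4,3,3,3,3)$ or $(4,4,4,3,3,2)$ --- but since that sequence also appears in~\eqref{eq: forb deg seq}, the verification you describe goes through unchanged.
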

\begin{proof}
If $v$ and $w$ are vertices of $N_3$ that differ on their neighbors in $C$, then $H[C \cup \{v,w\}]$ has degree sequence $(3,3,3,3,2,2)$ or $(4,4,3,3,2,2)$ and hence belongs to $\mathcal{B}$. If $v$ and $w$ are adjacent and have the same neighbors in $C$, then $H[C \cup \{v,w\}]$ has degree sequence $(4,4,3,3,3,1)$ and thus belongs to $\mathcal{B}$.
\end{proof}

\begin{fact}
$|N_3| \leq 2$.
\end{fact}
\begin{proof}
Suppose that $t,u,v$ are distinct vertices in $N_3$, and without loss of generality assume that these are all adjacent to $\{a,b,c\}$. Then $H[\{a,b,c,t,u,v\}]$ has degree sequence $(4,4,3,3,3,3)$ and hence belongs to $\mathcal{B}$, a contradiction.
\end{proof}

Without loss of generality, we may assume that all vertices in $N_1$ are adjacent to $a$.

\begin{fact}
If $N_1$ and $N_3$ are both nonempty, then $H$ is isomorphic to $U$.
\end{fact}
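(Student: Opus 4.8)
The plan is to pin down the entire local structure of $H$ by choosing representatives $t \in N_1$ and $u \in N_3$ and testing small induced subgraphs against $\mathcal{B}$-freeness. Recall that $t$ is adjacent to $a$, that $u$ is adjacent to exactly three vertices of $C$, and that by the preceding facts every vertex of $N_3$ shares the same three neighbors in $C$; thus it suffices to determine those neighbors for $u$. First I would fix which vertex of $C$ is the non-neighbor of $u$ and whether $t$ is adjacent to $u$, by computing the degree sequence of $H[C \cup \{t,u\}]$ in each of the four-times-two cases (four choices for the missing neighbor, two for the $t$--$u$ adjacency). If $u$ misses $b$, $c$, or $d$, then whether or not $t$ is adjacent to $u$ the six-vertex subgraph $H[C \cup \{t,u\}]$ has one of the sequences $(3,3,2,2,1,1)$ or $(4,3,2,2,2,1)$ from~\eqref{eq: forb deg seq}; if $u$ misses $a$ but $t$ is nonadjacent to $u$, the sequence is $(3,2,2,2,2,1)$. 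Each of these lies in $\mathcal{B}$, contradicting $\mathcal{B}$-freeness. The sole survivor is that $u$ is adjacent to $b,c,d$ and to $t$, in which case $H[C \cup \{t,u\}]$ has degree sequence $(4,2,2,2,2,2)$ and hence, since $U$ is the unique graph with that degree sequence, is isomorphic to $U$. As $t$ and $u$ were arbitrary, every vertex of $N_3$ is adjacent to exactly $b,c,d$ and every vertex of $N_1$ is adjacent to every vertex of $N_3$.

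Next I would show that $N_1$ and $N_3$ are singletons, which (given that both are nonempty) forces $V(H) = C \cup \{t\} \cup \{u\}$, so that $H$ has exactly six vertices and $H = H[C\cup\{t,u\}] \cong U$. To rule out $|N_3| \geq 2$, take distinct $u_1, u_2 \in N_3$; these are nonadjacent, both adjacent to $b,c,d$, and both adjacent to $t$, so the subgraph $H[\{b,c,d,t,u_1,u_2\}]$ has degree sequence $(4,4,3,3,2,2) \in \mathcal{B}$. To rule out $|N_1| \geq 2$, take distinct $t_1, t_2 \in N_1$; these are nonadjacent, both adjacent to $a$, and both adjacent to $u$, so the subgraph $H[\{a,b,c,t_1,t_2,u\}]$ has degree sequence $(4,3,2,2,2,1) \in \mathcal{B}$. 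Both contradict $\mathcal{B}$-freeness, giving $|N_1| = |N_3| = 1$ and completing the argument.

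The computations themselves are routine; the only care required is the bookkeeping of adjacencies across $C$, $N_1$, and $N_3$ (for instance, that $t$ is nonadjacent to $b,c,d$ and that two vertices of $N_3$ are nonadjacent), all of which is supplied by the earlier facts. The genuinely load-bearing choices—and the place the argument would stall if made carelessly—are the selections of the specific six-vertex sets in each step, namely $\{b,c,d,t,u_1,u_2\}$ and $\{a,b,c,t_1,t_2,u\}$: these are engineered precisely so that their degree sequences land in~\eqref{eq: forb deg seq}, and a different choice of which elements of $C$ to include need not yield a forbidden sequence.
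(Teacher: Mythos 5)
Your proof is correct and follows essentially the same approach as the paper: pin down the adjacencies of a vertex of $N_3$ to $C$ and to $N_1$ by checking degree sequences of $H[C\cup\{t,u\}]$ against~\eqref{eq: forb deg seq}, then bound $|N_1|$ and $|N_3|$ by exhibiting six-vertex induced subgraphs with forbidden degree sequences. The only (immaterial) difference is the witness set for ruling out $|N_3|\geq 2$: you use $\{b,c,d,t,u_1,u_2\}$ with sequence $(4,4,3,3,2,2)$, while the paper uses $\{a,b,c,u,v_1,v_2\}$ with sequence $(3,3,3,3,2,2)$; both lie in $\mathcal{B}$.
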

\begin{proof}
Let $u$ be an arbitrary vertex of $N_1$, and let $v$ be an arbitrary vertex of $N_3$. We claim first that $v$ is adjacent to $b$, $c$, and $d$; if not, then $H[C \cup \{u,v\}]$ has $(3,3,2,2,1,1)$ or $(4,3,2,2,2,1)$ as its degree sequence and hence belongs to $\mathcal{B}$. We also have that $v$ is adjacent to $u$; otherwise, $H[C \cup \{u,v\}]$ has degree sequence $(3,2,2,2,2,1)$ and thus belongs to $\mathcal{B}$.

Now if $u_1,u_2$ are vertices of $N_1$ and $v$ is a vertex of $N_3$, then $H[\{a,b,c,u_1,u_2,v\}]$ has degree sequence $(4,3,2,2,2,1)$ and belongs to $\mathcal{B}$. If $u$ is in $N_1$ and $v_1,v_2$ are vertices of $N_3$, then $H[\{a,b,c,u,v_1,v_2\}]$ has degree sequence $(3,3,3,3,2,2)$ and hence belongs to $\mathcal{B}$. Thus $|N_1|=|N_3|=1$, and $H$ is isomorphic to $U$.
\end{proof}

Recall our assumption that $H$ has at least six vertices. If $|N_3|=0$, then $H$ is isomorphic to $K_2+K_{1,m}$, where $m=|N_1|+1$ and $m \geq 3$. If $|N_3|=1$, then $N_1$ is nonempty and hence $H$ is isomorphic to $U$. Otherwise $|N_3|=2$ and $H$ is isomorphic to $\overline{U}$.

These complete the cases for our assumption that $H$ induces $2K_2$. If instead $H$ induces $C_4$, then consider $\overline{H}$; it is an indecomposable graph on at least six vertices that induces $2K_2$. It is also $\mathcal{B}$-free and $\{C_5,P_5,\textup{house}, K_2 + K_3, K_{2,3}\}$-free (note that both these sets are closed under complementation). By the arguments above, $\overline{H}$ is isomorphic to one of the graphs listed in the previous paragraph. This means that $H$ is isomorphic to one of $U$, $\overline{U}$, or $(K_m + K_1) \vee 2K_1$ for some $m \geq 3$.

\bigskip
\noindent \emph{(4) implies (5):} By Lemma~\ref{lem: FHM} every realization of $d$ can be obtained by performing a sequence of 2-switches on $G$. It suffices to prove that if $G'$ is a graph resulting from a single 2-switch on $G$, then $G'$ has the structure described in (4). By considering required adjacencies in decomposable graphs, we see that the four vertices involved in any 2-switch must all belong to either $V_1 \cup V_2$ or to $V_3$. Any 2-switch having vertices in $V_1 \cup V_2$ must have its alternate edges and nonedges each involving one vertex from $V_1$ and one vertex from $V_2$; the 2-switch therefore leaves $V_1$ an independent set and $V_2$ a clique, and it cannot change which vertices any vertex in $V_3$ may be adjacent to. The same is true for any 2-switch whose vertices all belong to $V_3$. Thus $G'$ satisfies properties (i) and (ii) of (4). Furthermore, the preceding arguments about 2-switches with vertices in $V_1\cup V_2$ also show that if $G[V_3]$ is split, then any 2-switch with vertices belonging to $V_3$ leaves $G'[V_3]$ also split. Since $G[V_3]$ is indecomposable and 2-switches preserve decomposability, $G'[V_3]$ is also indecomposable. Note also $G[V_3]$ contains the same (number of) vertices as $G'[V_3]$. Finally, observe that performing a 2-switch on any member of $\{U, \overline{U}, K_2 + K_{1,m}, (K_m + K_1) \vee 2K_1\}$ (where $m \geq 3$) preserves the isomorphism class of the member. Thus condition (iii) also holds for $G'$.

\bigskip
\noindent \emph{(5) implies (1):} Let $G$ be a realization of $d$, and suppose $G$ contains a $(3,3)$-blossom. Let $V_1,V_2,V_3$ be as described in (4). Suppose first that the $(3,3)$-blossom is of the type shown on the left in Figure~\ref{fig: int blossoms} having three edges and four non-edges. Let $u$ and $v$ denote the vertices in the figure's center. Neither $u$ nor $v$ can belong to $V_2$; indeed, each is non-adjacent to two vertices, which thus belong to $V_1$, which contradicts that $V_1$ is an independent set. Nor can one of $u$ or $v$ belong to $V_1$, since the other would then belong to $V_2$.

Thus both $u$ and $v$ belong to $V_3$. Consider the two non-neighbors $s,t$ of $u$ in the $(3,3)$-blossom. Since both are nonadjacent to $u$, neither can belong to $V_2$. Thus $s$ and $t$ belong to $V_1 \cup V_3$; since no vertex in $V_1$ has any neighbor in $V_1 \cup V_3$, we conclude that $s$ and $t$ belong to $V_3$. A similar argument applies to the two non-neighbors of $v$. Hence all vertices of the $(3,3)$-blossom belong to $V_3$. If $G$ instead contains a $(3,3)$-blossom of the type shown on the right in Figure~\ref{fig: int blossoms}, then similar arguments, with $V_1$ and $V_2$ trading roles, again show that all the $(3,3)$-blossom vertices belong to $V_3$.

This is a contradiction, since $G[V_3]$ cannot contain a $(3,3)$-blossom, as we now show. We know that $G[V_3]$ has the form specified in (4); no graph on fewer than six vertices is large enough to contain a $(3,3)$-blossom. No split graph can contain a $(3,3)$-blossom, for the same reasons given above that $G[V_1 \cup V_2]$ (which is a split graph) could not contain a $(3,3)$-blossom. Finally, it is a simple matter to verify that none of $U$, $\overline{U}$, $K_2+K_{1,m}$ or $(K_m+K_1) \vee 2K_1$ contains a $(3,3)$-blossom. Thus no realization of $d$ contains a $(3,3)$-blossom, and this completes the proof of Theorem~\ref{thm: decisive graph equivalences}.

\bigskip
We note in closing that the structure of the $\mathcal{B}$-free graphs presented in (4) of Theorem~\ref{thm: decisive graph equivalences} generalizes the structure of $\{2K_2,C_4\}$-free graphs, known also as the pseudo-split graphs~\cite{MaffrayPreissmann}. In~\cite{BlazsikEtAl93}, Bl\'{a}zsik et al.~showed that a graph is $\{2K_2,C_4\}$-free if and only if it is split or has a partition $V_1$, $V_2$, $V_3$ of its vertex set such that conditions (i) and (ii) of (4) above hold, and $V_3$ is the vertex set of an induced $C_5$. Decisive graphs thus also include the split graphs ($\{2K_2,C_4,C_5\}$-free graphs~\cite{FoldesHammer76}) and threshold graphs ($\{2K_2,C_4,P_4\}$-free graphs~\cite{ChvatalHammer73}).

\section{A degree sequence characterization} \label{sec: five}
We now use the structure of decisive graphs given in condition (4) of Theorem~\ref{thm: decisive graph equivalences} to characterize their degree sequences.

Our characterization will involve the well known Erd\H{o}s--Gallai inequalities. Given a list $\pi = (\pi_1,\dots,\pi_n)$ of nonnegative integers in nonincreasing order, the \emph{$k$th Erd\H{o}s--Gallai inequality} is the statement \[\sum_{i=1}^k \pi_i \leq k(k-1) + \sum_{i>k} \min\{k,d_i\}.\] Erd\H{o}s and Gallai~\cite{ErdosGallai60} showed that $\pi$ is the degree sequence of a simple graph if and only if $\pi$ has even sum and satisfies the Erd\H{o}s--Gallai inequalities for all $k \in \{1,\dots,n\}$. We observe that by evaluating an empty sum as $0$, the $0$th Erd\H{o}s--Gallai inequality holds with equality for all graphic lists.

\begin{thm} \label{thm: decisive seqs}
Let $d=(d_1,\dots,d_n)$ be a graphic list in weakly decreasing order. Let $k$ be the largest integer such that $d$ satisfies the $k$th Erd\H{o}s--Gallai inequality with equality. The list $d$ is a decisive sequence if and only if one of the following is true:
\begin{enumerate}
\item[\textup{(1)}] $k = \max\{i:d_i \geq i-1\}$;
\item[\textup{(2)}] the number $\ell = \max \{i:d_i \geq k \text{ and } i>k\}$ exists and satisfies one of
\begin{enumerate}
\item[\textup{(i)}] $\ell - k \leq 5$;
\item[\textup{(ii)}] $(d_{k+1}-k, \dots, d_\ell - k)$ is one of \[(3, 3, 3, 3, 3, 1), (4, 2, 2, 2, 2, 2), (m, 1^{(m+2)}), ((m + 1)^{(m+2)}, 2)\] where $m \geq 3$.
\end{enumerate}
\end{enumerate}
\end{thm}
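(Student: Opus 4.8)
The plan is to read the numerical criterion straight off the structural description in Theorem~\ref{thm: decisive graph equivalences}(4): $d$ is decisive exactly when it has a realization $G$ whose canonical decomposition (Theorem~\ref{thm: unique decomp}) $V_1,V_2,V_3$ has an indecomposable core $G[V_3]$ that is split, has fewer than six vertices, or is one of $U$, $\overline{U}$, $K_2+K_{1,m}$, $(K_m+K_1)\vee 2K_1$. The whole argument is a translation of this trichotomy into the language of Erd\H{o}s--Gallai equalities. So first I would record the elementary degree arithmetic of the decomposition: writing $b=|V_2|$, every vertex of $V_2$ has degree at least $b-1+|V_3|$, every vertex of $V_3$ has degree exactly $b$ plus its degree in $G[V_3]$, and every vertex of $V_1$ has degree at most $b$. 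Hence the top $b$ terms of $d$ are the $V_2$-degrees, the terms strictly exceeding $b$ are the $V_3$-degrees each shifted up by $b$, and so the list $(d_{k+1}-k,\dots,d_\ell-k)$ should turn out to be the degree sequence of the core $G[V_3]$.

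The central lemma I would prove links the Erd\H{o}s--Gallai equalities to the parameter $b$. A direct computation (summing the $V_2$-degrees against the contributions $\min\{b,d_i\}$ of the remaining vertices) shows that $d$ satisfies the $b$th Erd\H{o}s--Gallai inequality with equality whenever $G$ admits a partition as in (i)--(ii) with $|V_2|=b$. The harder direction, and the main obstacle, is the converse control: that the largest equality index $k$ equals $b=|V_2|$ precisely when the core is non-split, and equals $m=\max\{i:d_i\ge i-1\}$ when the core is split. For the split case this is the classical Hammer--Simeone/Erd\H{o}s--Gallai fact that $d$ is a split sequence iff equality holds at $m$; one then checks $m$ is the \emph{largest} equality index, yielding condition (1). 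For the non-split case I must show no equality index exceeds $b$, arguing that an equality at some $j>b$ would force a nontrivial clique/independent split at the top of $G[V_3]$ (in particular a dominating or isolated vertex of the core), contradicting that $G[V_3]$ is non-split and indecomposable. Carrying this out while correctly accounting for degree ties between $V_2$- and $V_3$-vertices, and for $V_1$-vertices of degree exactly $b$ that sit at the boundary defining $\ell$, is the technical heart of the proof.

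Granting the lemma, the remainder is bookkeeping. In the non-split case $k=b$, so $\ell-k$ counts the core vertices, $\ell-k=|V_3|$, and condition (2)(i) ($\ell-k\le 5$) is exactly the ``fewer than six vertices'' clause. When $|V_3|\ge 6$ the core is one of the four infinite families, so I would finish by computing their degree sequences and matching them to (2)(ii): $\overline{U}$ gives $(3,3,3,3,3,1)$, $U$ gives $(4,2,2,2,2,2)$, $K_2+K_{1,m}$ gives $(m,1^{(m+2)})$, and $(K_m+K_1)\vee 2K_1$ gives $((m+1)^{(m+2)},2)$. Since being split is a degree-sequence property (every realization of a split sequence is split) and a split core forces $G$ itself to be split, conditions (1) and (2) together capture exactly the decisive sequences. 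I expect the only real friction to be the converse half of the central lemma together with the delicate treatment of the boundary index $\ell$ in the presence of vertices of degree exactly $k$.
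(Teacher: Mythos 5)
Your proposal follows essentially the same route as the paper: translate condition (4)(iii) of Theorem~\ref{thm: decisive graph equivalences} into Erd\H{o}s--Gallai language via the decomposition of Theorem~\ref{thm: unique decomp}, using Hammer--Simeone for the split case and the identification $V_2=\{v_1,\dots,v_k\}$, $V_1=\{v:d_G(v)<k\}$ in the non-split case, so that $|V_3|=\ell-k$ and the core has degree sequence $(d_{k+1}-k,\dots,d_\ell-k)$, which is then matched against the four unigraph families. The only difference is that what you call the ``central lemma'' (that the largest equality index equals $|V_2|$ when the core is non-split) is not proved in the paper at all but imported from~\cite{Barrus13} as Lemma~\ref{lem: j leq m} and Theorem~\ref{thm: k and ell}; the technical heart you flag, and would need to carry out in full, is precisely the part the paper discharges by citation.
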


We prove Theorem~\ref{thm: decisive seqs} in the remainder of this section. We proceed by showing that the conditions in (1) and (2) are equivalent to the cases in statement (iii) in Theorem~\ref{thm: decisive graph equivalences}.

Given an arbitrary graph $G$, let $V_1,V_2,V_3$ be vertex sets partitioning $V(G)$ as in Theorem~\ref{thm: unique decomp}. We observe that $G$ is split if and only if $G[V_3]$ is split. Hammer and Simeone~\cite{HammerSimeone81} gave a characterization of split graphs in terms of their degree sequences.

\begin{thm}[\cite{HammerSimeone81}]
Let $G$ be a graph, and let $(d_1,\dots,d_n)$ be its degree sequence in weakly decreasing order. The graph $G$ is a split graph if and only if \[\sum_{i=1}^m d_i = m(m-1) + \sum_{i>m} d_i,\] where $m = \max\{i:d_i \geq i-1\}$.
\end{thm}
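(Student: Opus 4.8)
The plan is to reduce the claimed degree-sum identity to a statement about a single, canonically chosen vertex partition, and then to show that this canonical partition is split exactly when $G$ is split.

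First I would set $m=\max\{i:d_i\ge i-1\}$, let $V'$ be any set of $m$ vertices of largest degree, and write $W=V(G)\setminus V'$. Counting edge-endpoints gives $\sum_{i=1}^m d_i = 2e(V')+e(V',W)$ and $\sum_{i>m} d_i = 2e(W)+e(V',W)$, where $e(\cdot)$ counts edges inside a set and $e(\cdot,\cdot)$ counts edges between two sets. Subtracting, the identity in the theorem is equivalent to $e(V')-e(W)=\binom{m}{2}$. Since always $e(V')\le\binom{m}{2}$ and $e(W)\ge 0$, this holds if and only if $e(V')=\binom{m}{2}$ and $e(W)=0$, i.e.\ if and only if $V'$ induces a clique and $W$ is an independent set. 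This already settles the reverse direction: if the identity holds, then $(V',W)$ is a split partition, so $G$ is split.

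For the forward direction I would fix a maximum clique $C$ with $t=|C|$ and put $S=V(G)\setminus C$, which is independent because $G$ is split. Every vertex of $C$ has degree at least $t-1$, and no vertex of $S$ has degree exceeding $t-1$; otherwise a vertex of $S$ adjacent to all of $C$ would enlarge the clique. From $d_t\ge t-1$ we get $m\ge t$, and since at most $t$ vertices can have degree at least $t$ (all such vertices lying in $C$), we also get $m\le t$; hence $m=t$. It then remains to exhibit a top-$m$ set $V'$ that is a clique with independent complement, which forces the identity via the reduction above.

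The main obstacle is the tie-breaking among vertices of degree exactly $t-1$, where clique and independent vertices compete for the top-$m$ positions. The key structural observation is that a vertex of $S$ of degree $t-1$ is adjacent to all of $C$ except exactly one vertex, while a vertex of $C$ of degree $t-1$ has no neighbor in $S$. Consequently a degree-$(t-1)$ vertex of $C$ is non-adjacent to every degree-$(t-1)$ vertex of $S$, so these two types cannot both occur with multiplicity: if $C$ has two vertices of degree $t-1$ then $S$ has none, and if $S$ has two such vertices then $C$ has at most one. I would use this dichotomy to show that every top-$m$ selection either equals $C$ or replaces a single degree-$(t-1)$ clique vertex $c$ by a degree-$(t-1)$ independent vertex $x$; since the unique non-neighbor of $x$ in $C$ is precisely $c$, the swapped set is again a clique and its complement is again independent. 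Thus the canonical partition is split, the identity $e(V')-e(W)=\binom{m}{2}$ holds, and the proof is complete.
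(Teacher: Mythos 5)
You are proving a result the paper only cites (it is quoted from Hammer and Simeone without proof), so your argument must stand alone. Your reverse direction is correct and is the standard argument: the counting reduction to $e(V')-e(W)=\binom{m}{2}$, together with $e(V')\le\binom{m}{2}$ and $e(W)\ge 0$, forces $V'$ to be a clique and $W$ independent. The forward direction, however, opens with a false step: you fix an \emph{arbitrary} maximum clique $C$ and assert that $S=V(G)\setminus C$ ``is independent because $G$ is split.'' That is not true of every maximum clique in a split graph. Take the split graph on $\{u,k_1,k_2,x,v\}$ with triangle $uk_1k_2$, with $x$ adjacent to $k_1$ and $k_2$, and with $v$ adjacent only to $u$; this is split with clique $\{u,k_1,k_2\}$ and independent set $\{x,v\}$, yet $\{k_1,k_2,x\}$ is also a maximum clique whose complement $\{u,v\}$ contains the edge $uv$. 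For this choice of $C$ your subsequent claims fail outright: here $t=3$ and $u\in S$ has degree $3>t-1$, so ``no vertex of $S$ has degree exceeding $t-1$'' is false, and with it the justification that $m\le t$ collapses.

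The gap is repairable by a standard fact you would need to state and prove: starting from any split partition $(C,S)$, repeatedly move into $C$ any vertex of $S$ adjacent to all of $C$; this terminates with $C$ an inclusion-maximal clique and $S$ still independent, and in a split graph such a $C$ is automatically maximum, since any clique meets $S$ in at most one vertex and so has size at most $|C|+1$, with equality contradicting maximality of $C$. With $C$ chosen this way, your degree bounds and the conclusion $m=t$ go through verbatim. Moreover, you can then discard the entire tie-breaking analysis: $\sum_{i=1}^m d_i$ is the sum of the $m$ largest degree \emph{values} however ties are broken, and since every vertex of $C$ has degree at least $t-1$ while every vertex of $S$ has degree at most $t-1$, the set $C$ itself realizes the top $t$ degrees; taking $V'=C$ in your counting identity immediately gives $e(V')=\binom{t}{2}$ and $e(W)=0$, hence the equality. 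Your swap argument for degree-$(t-1)$ vertices is correct once $C$ is chosen as above, but it establishes that \emph{every} top-$m$ selection works, which is more than the identity requires.
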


With $m=\max\{i:d_i \geq i-1\}$, note that for $i>m$ we have $d_i \leq d_{m+1} < m$, so $d_i = \min\{m,d_i\}$. We also have the following.

\begin{lem}[{\cite[Corollary 5.5]{Barrus13}}]\label{lem: j leq m}
If the $j$th Erd\H{o}s--Gallai inequality holds with equality then $j \leq m$.
\end{lem}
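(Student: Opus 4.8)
The plan is to work with the \emph{slack} in each Erd\H{o}s--Gallai inequality and to show that, once the index exceeds $m$, this slack strictly increases, so it can never return to zero. For $0 \leq k \leq n$ define
\[
s_k = k(k-1) + \sum_{i>k}\min\{k,d_i\} - \sum_{i=1}^k d_i,
\]
so that the $k$th Erd\H{o}s--Gallai inequality is exactly $s_k \geq 0$, holding with equality precisely when $s_k = 0$. Because $d$ is graphic, every $s_k \geq 0$, and evaluating empty sums as $0$ gives $s_0 = 0$. The lemma asserts that $s_j = 0$ forces $j \leq m$; I would prove the contrapositive, that $s_j > 0$ whenever $j > m$.

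The key computation is the one-step difference $s_k - s_{k-1}$. A direct expansion gives
\[
s_k - s_{k-1} = 2(k-1) - d_k - \min\{k-1,d_k\} + \bigl|\{i>k : d_i \geq k\}\bigr|,
\]
where the quadratic terms contribute $k(k-1)-(k-1)(k-2)=2(k-1)$, the degree sums contribute $-d_k$, and the $\min\{k,\cdot\}$ sums contribute $|\{i>k:d_i\geq k\}| - \min\{k-1,d_k\}$ (using that $\min\{k,d_i\}-\min\{k-1,d_i\}$ equals $1$ exactly when $d_i \geq k$ and $0$ otherwise). The main observation is that for $k > m$ both correction terms are controlled by the definition of $m$. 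Indeed, $k > m$ means $d_k \leq k-2$, so $\min\{k-1,d_k\}=d_k$; and since the list is weakly decreasing, $d_i \leq d_k \leq k-2 < k$ for every $i>k$, so the set $\{i>k:d_i\geq k\}$ is empty. Substituting these facts yields $s_k - s_{k-1} = 2(k-1) - 2d_k = 2(k-1-d_k) \geq 2$ for all $k>m$.

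With this in hand the conclusion is immediate. Telescoping from $m$ to any $j>m$,
\[
s_j = s_m + \sum_{k=m+1}^{j}(s_k - s_{k-1}) \geq s_m + 2(j-m) \geq 2 > 0,
\]
using $s_m \geq 0$. Hence $s_j > 0$ whenever $j>m$, which is the contrapositive of the lemma.

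I expect the only delicate point to be the bookkeeping in the difference $s_k - s_{k-1}$, in particular tracking the $\min\{k,d_i\}$ terms and verifying the identity that $\min\{k,d_i\}-\min\{k-1,d_i\}=1$ exactly when $d_i \geq k$; but once that identity is recorded, the definition of $m$ does all the remaining work, since it simultaneously empties the residual set $\{i>k:d_i\geq k\}$ and collapses $\min\{k-1,d_k\}$ to $d_k$. No deeper structural input (and nothing about realizations) is needed: the statement is purely about the numerics of the Erd\H{o}s--Gallai slacks.
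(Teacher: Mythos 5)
Your proof is correct. Note that the paper itself does not prove this lemma at all: it is imported by citation as Corollary~5.5 of~\cite{Barrus13}, so there is no in-paper argument to compare against. Your self-contained route---introducing the slack $s_k$ and showing $s_k - s_{k-1} = 2(k-1) - d_k - \min\{k-1,d_k\} + |\{i>k: d_i \geq k\}|$, then observing that for $k>m$ the definition of $m$ gives $d_k \leq k-2$, hence $\min\{k-1,d_k\}=d_k$ and $\{i>k: d_i\geq k\}=\emptyset$, so the slack grows by at least $2$ at every step past $m$---is exactly the kind of elementary numerical argument one would expect, and all the bookkeeping checks out (the identity $\min\{k,d_i\}-\min\{k-1,d_i\}=\mathbf{1}[d_i\geq k]$, the quadratic difference $2(k-1)$, and the base case $s_m\geq 0$ from graphicality). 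As a bonus your argument yields slightly more than the cited statement, namely the quantitative bound $s_j \geq 2(j-m)$ for $j>m$, which in particular re-derives the paper's observation that only indices $j\leq m$ can be Erd\H{o}s--Gallai equalities without appealing to any structural facts about realizations.
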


Hence a graph is split if and only if $k = m$. Thus condition (1) in Theorem~\ref{thm: decisive seqs} is equivalent to the first part of condition (4)(iii) of Theorem~\ref{thm: decisive graph equivalences}.

We move now to the condition (2) in Theorem~\ref{thm: decisive seqs}. In~\cite{Barrus13} the author described the relationship between the canonical decomposition of a degree sequence (see~\cite{Tyshkevich00}) and equalities among the Erd\H{o}s--Gallai inequalities. As mentioned in the previous section, the canonical decomposition is a finer vertex partition than the partition $V_1$, $V_2$, $V_3$ defined above. With $k$ and $\ell$ defined as above, applying the results of~\cite{Barrus13} to the current context yields the following:

\begin{thm}[{\cite[Theorem 5.6]{Barrus13}}]\label{thm: k and ell}
Let $G$ be a graph with degree sequence $d=(d_1,\dots,d_n)$ and vertex set $\{v_1,\dots,v_n\}$, indexed so that $d_G(v_i)=d_i$. Suppose that $G$ is  decomposable with vertex partition $V_1,V_2,V_3$ as defined above, with $G[V_3]$ indecomposable. If $G$ is not split, then the clique $V_2$ is equal to the set $\{v_i : i \leq k\}$. In this case $V_1$ is precisely the set $\{v \in V(G): d_G(v)<k\}$.
\end{thm}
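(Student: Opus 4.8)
The plan is to prove Theorem~\ref{thm: k and ell} by first pinning down the clique $V_2$ through a degree-counting argument, and then characterizing $V_1$ as a consequence. The key is the interplay between the Erd\H{o}s--Gallai equalities and the decomposition structure.

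First I would establish the correspondence between $k$ and the size of $V_2$. Recall that $k$ is the largest index for which the $k$th Erd\H{o}s--Gallai inequality holds with equality, and by Lemma~\ref{lem: j leq m} we have $k \leq m$. I would argue that, under the decomposable structure with $G[V_3]$ indecomposable, the clique $V_2$ consists precisely of the vertices of highest degree. The intuition is that each vertex of $V_2$ is adjacent to all other vertices of $V_2$, to all of $V_3$, and to nothing in $V_1$, so its degree is $(|V_2|-1) + |V_3|$; meanwhile every vertex in $V_3$ has at most $|V_2| + (|V_3|-1)$ neighbors and (crucially, when $G$ is not split, so $V_3$ is not a clique) at least one vertex of $V_3$ has strictly smaller degree than a $V_2$-vertex. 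Thus the $V_2$-vertices occupy the top $|V_2|$ positions in the sorted degree list. I would then show the $|V_2|$th Erd\H{o}s--Gallai inequality holds with equality: summing the degrees of the $|V_2|$ highest-degree vertices (the $V_2$-vertices) counts each edge within $V_2$ twice and each edge from $V_2$ to $V_3$ once, and one verifies directly that this equals $|V_2|(|V_2|-1) + \sum_{i > |V_2|} \min\{|V_2|, d_i\}$, using that every vertex outside $V_2$ has all of $V_2$ as neighbors (so $\min\{|V_2|, d_i\} = |V_2|$ for $V_3$-vertices) or lies in $V_1$ with no neighbors in $V_2$.

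The main obstacle, and the heart of the argument, is showing that the equality index $k$ equals $|V_2|$ exactly rather than merely $k \geq |V_2|$. For this I would invoke the maximality in the definition of $k$ together with the characterization of $G$ being split as $k = m$: since $G$ is assumed not split, $k < m$, and one must rule out any equality at an index strictly between $|V_2|$ and $m$. The cleanest route is to appeal directly to the cited machinery of~\cite{Barrus13}, where the relationship between equalities in the Erd\H{o}s--Gallai system and the boundaries of the canonical decomposition blocks is worked out; the partition $V_1,V_2,V_3$ here is a coarsening of that canonical decomposition, and the equality at index $|V_2|$ (and the failure of equality strictly between $|V_2|$ and $m$ when $V_3$ is a nonsplit indecomposable block) is exactly what that correspondence delivers. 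Hence $k = |V_2|$ and $V_2 = \{v_i : i \leq k\}$.

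Finally, with $V_2 = \{v_i : i \leq k\}$ established, I would identify $V_1$ as $\{v \in V(G) : d_G(v) < k\}$. Every vertex of $V_2$ has degree at least $(k-1) + |V_3| \geq k$ (since $V_3 \neq \emptyset$), and every vertex of $V_3$ is adjacent to all $k$ vertices of $V_2$, hence has degree at least $k$ as well. Conversely, each vertex of $V_1$ has neighbors only in $V_2$, so its degree is at most $|V_2| - 1$ is false in general, but it is adjacent to no vertex of $V_1 \cup V_3$ and only to some subset of $V_2$, giving degree at most $k$; one checks that the degree is in fact strictly less than $k$ because a $V_1$-vertex adjacent to all of $V_2$ would, together with the indecomposability of $G[V_3]$, contradict the maximality of $V_3$ in the partition. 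Therefore $V_1$ is precisely the set of vertices with degree below $k$, completing the proof.
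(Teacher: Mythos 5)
The paper does not actually prove this statement; it is imported wholesale from \cite{Barrus13}, so there is no internal argument to compare yours against. Judged on its own terms, your proposal has a genuine gap at exactly the point you yourself flag as ``the heart of the argument'': after showing that the $|V_2|$th Erd\H{o}s--Gallai inequality holds with equality (so $k \ge |V_2|$), you must rule out equality at every index strictly greater than $|V_2|$, and for this you ``appeal directly to the cited machinery of~\cite{Barrus13}.'' Since the statement being proved \emph{is} \cite[Theorem 5.6]{Barrus13}, that appeal is circular; neither the present paper nor your sketch supplies the missing argument.

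There are also concrete errors in the steps you do carry out. The partition constrains only the adjacencies between $V_3$ and $V_1\cup V_2$; edges between $V_1$ and $V_2$ are unrestricted (in general $G[V_1\cup V_2]$ is an arbitrary split graph with those parts). So your claims that a $V_2$-vertex has degree exactly $(|V_2|-1)+|V_3|$ and that $V_1$-vertices have no neighbors in $V_2$ are false. The Erd\H{o}s--Gallai equality at index $|V_2|$ does still hold, but only because the number of edges between $V_1$ and $V_2$ appears on both sides and cancels, which your computation does not exhibit. Finally, your justification of the last clause---that a $V_1$-vertex adjacent to all of $V_2$ would ``contradict the maximality of $V_3$''---is unfounded: the partition is pinned down by the indecomposability of $G[V_3]$, not by any maximality of $V_3$, and such vertices genuinely occur. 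For instance, let $a$ be joined to every vertex of a $C_5$ and attach a pendant vertex $u$ to $a$; then $V_1=\{u\}$, $V_2=\{a\}$, $V_3=V(C_5)$, the degree sequence is $(6,3,3,3,3,3,1)$, and $k=1$, yet $d_G(u)=1=k$, so $u\in V_1$ while $d_G(u)\not<k$. Thus the clause ``$V_1=\{v: d_G(v)<k\}$'' cannot be established by your argument (and, as transcribed here, appears to require either an extra hypothesis or a $\le$ in place of $<$); at minimum a correct proof must confront such configurations rather than gesture at maximality.
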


Now assume that a realization $G$ with degree sequence $d$ and the usual decomposition $V_1,V_2,V_3$ is not split. By Lemma~\ref{lem: j leq m}, $d_m \geq m-1 \geq k$. Thus $\ell \geq m$, and by Theorem~\ref{thm: k and ell}, $\ell - m = |V_3|$. Thus $G[V_3]$ has fewer than six vertices if and only if $\ell - k \leq 5$. Furthermore, $(d_{k+1}-k,\dots,d_\ell - k)$ is the degree sequence of $G[V_3]$. Thus $G[V_3]$ is isomorphic to one of $U$, $\overline{U}$, $K_2+K_{1,m}$, or $(K_{m}+K_1)\vee 2K_1$ for some $m \geq 3$ if and only if $(d_{k+1}-k,\dots,d_\ell - k)$ is one of $(3, 3, 3, 3, 3, 1)$, $(4, 2, 2, 2, 2, 2)$, $(m, 1^{(m+2)})$, or $((m + 1)^{(m+2)}, 2)$ for some $m \geq 3$ (note that these graphs are the unique realizations, up to isomorphism, of their respective degree sequences). We have now shown the equivalence of the conditions (1) and (2) in Theorem~\ref{thm: decisive seqs} to condition (4)(iii) in Theorem~\ref{thm: decisive graph equivalences}.

Since condition (4) in Theorem~\ref{thm: decisive graph equivalences} characterizes realizations of decisive sequences, the proof of Theorem~\ref{thm: decisive seqs} is complete.

\section{Remarks} \label{sec: six}

We have here considered a polytope $P(d)$ arising naturally in the study of fractional realizations of a degree sequence $d$. We have characterized both the vertices of $P(d)$ and the degree sequences $d$ for which the vertices of the polytope correspond precisely to integral realizations of $d$. Since $P(d)$ is a bounded convex polytope, each vertex achieves the optimal value of some linear objective function in a linear program. As a possible direction for future study, we ask whether these objective functions may be used to conveniently identify individual realizations or illustrate their properties.

We remark that our characterization of decisive sequences and graphs in Theorem~\ref{thm: decisive vs blossoms} has an interesting form, in that $d$ is a decisive sequence if none of its (possibly many) realizations contains a certain configuration. This is reminiscent of a partial order $\preceq$ defined by Rao~\cite{Rao80} on the set of all graph degree sequences, in which $e \preceq d$ if there exists some realization of $d$ containing some realization of $e$. Restating part of Theorem~\ref{thm: decisive graph equivalences}, we have the following.

\begin{obs}~\label{obs: Rao}
A degree sequence $d$ is decisive if and only if for every sequence $e$ in~\eqref{eq: forb deg seq} we have $e \npreceq d$.
\end{obs}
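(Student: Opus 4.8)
The plan is to recognize that Observation~\ref{obs: Rao} is almost a direct translation of Theorem~\ref{thm: decisive graph equivalences}, equivalence (1), into the language of Rao's partial order. The statement to prove is that $d$ is decisive if and only if $e \npreceq d$ for every $e$ in~\eqref{eq: forb deg seq}. First I would unwind the definition of $\preceq$: writing $e \preceq d$ means there is a realization of $d$ that contains, as a subgraph (not necessarily induced, per Rao's original definition), a realization of $e$. The crux is to reconcile this with the $\mathcal{B}$-free condition, which is phrased in terms of \emph{induced} subgraphs isomorphic to members of $\mathcal{B}$, where $\mathcal{B}$ is the set of all graphs whose degree sequence appears in~\eqref{eq: forb deg seq}.

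Next I would argue the two directions. For the backward direction, suppose $d$ is not decisive. By Theorem~\ref{thm: decisive graph equivalences}, equivalence (1)$\Leftrightarrow$(2), some realization $G$ of $d$ contains an induced subgraph $F$ isomorphic to an element of $\mathcal{B}$, and $F$ has degree sequence equal to some $e$ in~\eqref{eq: forb deg seq}. Since $F$ is an induced (hence also a spanning-on-its-vertices) subgraph of $G$ that realizes $e$, we immediately obtain $e \preceq d$, contradicting the hypothesis that $e \npreceq d$ for all such $e$. The key point here is that $F$ realizes $e$ exactly, so it witnesses $e \preceq d$ directly. For the forward direction, suppose $e \preceq d$ for some $e$ in~\eqref{eq: forb deg seq}; then some realization $G$ of $d$ contains a realization $F$ of $e$ as a subgraph, and I must produce a realization of $d$ containing a $(3,3)$-blossom to show $d$ is not decisive.

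The main obstacle, and the step I would treat most carefully, is the gap between \emph{subgraph} (Rao's order) and \emph{induced subgraph} ($\mathcal{B}$-free). If $F$ is a non-induced realization of $e$ sitting inside $G$, it is not immediate that $G$ contains an induced copy of a member of $\mathcal{B}$. The clean way around this is to work on the vertex set of $F$ directly: the subgraph induced by $V(F)$ in $G$ is a graph $G[V(F)]$ whose degree sequence need not be $e$, but $F$ is obtained from $G[V(F)]$ by deleting edges. I would instead observe that $F$ \emph{itself} is a realization of $e$ on $V(F)$, and since $F$ contains a $(3,3)$-blossom or has the same degree sequence as a graph containing one (by the construction preceding~\eqref{eq: forb deg seq}, every $e$ in the list is the degree sequence of a graph built from a $(3,3)$-blossom), Lemma~\ref{lem: FHM} gives a sequence of 2-switches turning $F$ into a graph $F'$ on $V(F)$ containing a $(3,3)$-blossom. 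Here is the decisive move, paralleling the proof of Lemma~\ref{lem: hereditary}: performing these same 2-switches on the copy of $F$ inside $G$ (switches among edges and non-edges of $G$ restricted to $V(F)$) yields a realization of $d$ containing a $(3,3)$-blossom, so by Theorem~\ref{thm: decisive graph equivalences}, equivalence (1), $d$ is not decisive. The subtlety to verify is only that 2-switches applied to $F$ remain valid 2-switches in $G$, which holds because they involve only vertex pairs within $V(F)$ and do not disturb the rest of $G$; this is exactly the argument already used for Lemma~\ref{lem: hereditary}, so I would invoke it rather than repeat it.
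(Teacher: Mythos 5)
Your backward direction is fine, but the forward direction rests on a misreading of Rao's order, and the repair you propose does not work. The order $\preceq$ here (as in Rao's conjecture and the Chudnovsky--Seymour theorem the paper cites) is \emph{induced} containment: $e \preceq d$ means some realization of $d$ contains some realization of $e$ as an \emph{induced} subgraph. Under that reading the observation is literally a restatement of the equivalence of (1), (2) of Theorem~\ref{thm: decisive graph equivalences}: ``every realization of $d$ is $\mathcal{B}$-free'' is word-for-word ``no realization of $d$ induces a realization of any $e$ in~\eqref{eq: forb deg seq},'' i.e.\ $e \npreceq d$ for all such $e$. That is why the paper introduces the observation with ``Restating part of Theorem~\ref{thm: decisive graph equivalences}'' and offers no further argument. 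Under your non-induced reading the statement is actually \emph{false}: $(1,1,1,1,1,1)$ appears in~\eqref{eq: forb deg seq}, its unique realization (a perfect matching on six vertices) is a non-induced subgraph of $K_6$, so $(1^{(6)}) \preceq (5^{(6)})$ in the subgraph order; yet $(5^{(6)})$ is a split, hence decisive, sequence. So no correct proof of the forward direction can exist for the order you defined.

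The specific step that fails in your argument is the transfer of 2-switches from $F$ to $G$. A 2-switch on $F$ uses a non-edge $bc$ of $F$; when $F$ is not induced in $G$, the pair $bc$ may be an edge of $G$, so the ``same'' 2-switch is not a legal 2-switch in $G$ (it would create a multiple edge, and the resulting labeling need not be a realization of $d$). The argument of Lemma~\ref{lem: hereditary} that you invoke is valid precisely because there $H$ is an \emph{induced} subgraph of $G$, so every non-edge of $H$ is a non-edge of $G$. The fix is simply to use the induced-containment definition of $\preceq$ throughout, after which your forward direction collapses to: $e \preceq d$ gives a realization $G$ of $d$ with an induced subgraph realizing $e$, i.e.\ an induced member of $\mathcal{B}$, so $G$ is not $\mathcal{B}$-free and $d$ is not decisive by Theorem~\ref{thm: decisive graph equivalences}.
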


Theorem~\ref{thm: decisive seqs} is then an explicit characterization of the degree sequences that satisfy this partial order requirement. Chudnovsky and Seymour recently proved~\cite{ChudnovksySeymour} that $\preceq$ defines a well quasiorder, that is, given any infinite list of degree sequences, there exist two sequences in the list that are comparable under the relation $\preceq$. This implies that in any characterization in terms of ``forbidden degree sequences'' such as the one in Observation~\ref{obs: Rao}, a minimal list of forbidden sequences must be finite; as an illustration, our list in~\eqref{eq: forb deg seq} has 24 degree sequences.

Moving beyond decisive sequences, a number of questions remain about the polytope $P(d)$ for general $d$; we conclude with two. First, in defining $P(d)$ we considered only hyperplanes arising from the degree conditions and the hypercube conditions. As we noted in Section~1, these hyperplanes may create vertices of $P(d)$ that are non-integral. For such $d$, which additional hyperplanes are needed to ``trim off'' fractional vertices, resulting in a polytope that is the convex hull of the realization vertices? What combinatorial meaning do the corresponding inequalities have?

Second, because $P(d)$ is a subset of $\R^{\binom{n}{2}}$ containing points constrained by $n$ degree conditions, we note that $P(d)$ in general might have dimension as large as $(n^2-3n)/2$. However, for some $d$ the dimension is in fact much smaller. For instance, threshold sequences are the graphic sequences having a unique labeled realization, such as $(4,3,2,2,1)$, and these realizations are called threshold graphs. (For a monograph on these sequences and graphs and an extensive bibliography, see~\cite{MahadevPeled95}.) Suppose that $d$ is threshold. Note that every point of a convex polytope may be written as a convex combination of vertices of the polytope. Since threshold sequences are decisive sequences, as we observed in Section~\ref{sec: four}, we have the following observation.

\begin{obs}\label{obs: threshold}
If $d$ is a threshold sequence, then $d$ has a unique fractional realization, i.e., $P(d)$ consists of a single point.
\end{obs}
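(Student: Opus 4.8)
The plan is to show that $P(d)$ consists of a single point by arguing that the unique realization of a threshold sequence is its \emph{only} vertex, and then invoking convexity. Since every point of a convex polytope is a convex combination of its vertices, if $P(d)$ has exactly one vertex then $P(d)$ must equal that vertex, forcing the polytope to be a single point.

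First I would recall that a threshold sequence $d$ has a unique labeled realization $G$; this is precisely the defining property cited in the excerpt just before the observation. In particular, $d$ has exactly one integral realization (equivalently, one simple graph realization on $[n]$). Next I would invoke the fact, stated in Section~\ref{sec: four}, that threshold sequences are decisive. By the definition of decisive and by Theorem~\ref{thm: vtcs of S}, every vertex of $P(d)$ is integral, and each integral point of $P(d)$ is the characteristic vector of a simple graph realization of $d$. Since there is only one such realization, $P(d)$ has exactly one vertex, namely the characteristic vector $g$ of $G$.

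To finish, I would observe that a nonempty bounded convex polytope is the convex hull of its vertex set, so $P(d) = \mathrm{conv}\{g\} = \{g\}$. (The polytope is nonempty because $d$ is graphic, and bounded because it lies in the unit hypercube by the hypercube conditions~\eqref{eq: in [0,1]}.) Hence $d$ has a unique fractional realization, the integral one corresponding to $G$.

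I do not expect any serious obstacle here; the statement follows almost immediately once the two cited facts (uniqueness of the threshold realization and decisiveness of threshold sequences) are in hand. The only point requiring mild care is ensuring that ``decisive'' is being applied correctly: decisiveness guarantees all \emph{vertices} are integral, but one must also note that the polytope being the convex hull of its vertices is what upgrades ``one vertex'' to ``one point.'' This is standard convex-polytope theory and needs no computation.
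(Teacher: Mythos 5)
Your proposal is correct and follows essentially the same route as the paper: the paper's justification (given in the sentences immediately preceding the observation) likewise combines the uniqueness of the labeled threshold realization, the decisiveness of threshold sequences established in Section~4, and the fact that every point of the polytope is a convex combination of its vertices. Your write-up simply makes these steps slightly more explicit; there is no gap.
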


Thus for threshold sequences the dimension of $P(d)$ equals 0, and we ask for other properties of $d$ that restrict the dimension of $P(d)$. Observe, for example, that the dimension of $P(d)$ decreases whenever the degree and hypercube conditions uniquely determine the value of a variable $x_{ij}$. In light of Observation~\ref{obs: threshold}, this is what happens when $d$ is threshold, but it may happen for more general sequences; when $d=(2,2,1,1)$, the polytope conditions force $x_{12}=1$ and $x_{34}=0$, and $P(d)$ has dimension $1$, rather than $2$. In~\cite{Barrus14} the author showed that for simple graphs the corresponding forced adjacencies and non-adjacencies among vertices are preserved as one proceeds higher in the majorization partial order on fixed-length graphic partitions of an even integer. The forced adjacency relationships culminate with the threshold sequences, the maximal graphic partitions~\cite{PeledSrinivasan89}, where all edges and non-edges are uniquely determined. We therefore ask: given general degree sequences $d$ and $e$ such that $d$ majorizes $e$, is it true that the dimension of $P(d)$ is less than or equal to the dimension of $P(e)$?

\end{document}